\newtheorem{theorem}{Theorem}[section]
\newtheorem{lemma}[theorem]{Lemma}
\newtheorem{proposition}[theorem]{Proposition}
\newtheorem{corollary}[theorem]{Corollary}
\theoremstyle{definition}
\newtheorem{example}[theorem]{Example}
\theoremstyle{remark}
\newtheorem{remark}[theorem]{Remark}
\numberwithin{equation}{section}
\long\def\comment#1{}
\DeclareSymbolFont{cyss}{OT2}{wncyss}{m}{n}
\DeclareMathSymbol{\sh}{\mathbin}{cyss}{`x}
\markboth{\today}{\today}
\begin{document}
\title[Sum formulas for double polylogarithms]{Sum formulas for double polylogarithms with a shifting parameter and their derivatives}
\author{Kohji Matsumoto}
\address{K. Matsumoto: Graduate School of Mathematics, Nagoya University, Chikusa-ku, Nagoya 464-8602 Japan}
\email{kohjimat@math.nagoya-u.ac.jp}

\author{Hirofumi Tsumura}
\address{{H.\,Tsumura:} Department of Mathematics and Information Sciences, Tokyo Metropolitan University, 1-1, Minami-Ohsawa, Hachioji, Tokyo 192-0397 Japan}
\email{tsumura@tmu.ac.jp}

\begin{abstract}
We prove sum formulas for double polylogarithms of Hurwitz type, that
is, involving a shifting parameter $b$ in the denominator.   These
formulas especially imply well-known sum formulas for double zeta
values, and sum formulas for double $L$-values.   Further,
differentiating in $b$, we obtain a kind of weighted sum formula for
double polylogarithms and double $L$-values.    We also give 
sum formulas for partial double zeta values with some congruence
conditions.    Our proofs of those sum formulas are based on certain
functional relations for double polylogarithms of Hurwitz type.
\end{abstract}
\subjclass[2010]{11M32 (primary), 11M35 (secondary)}
\keywords{Double zeta values, polylogarithms, Hurwitz zeta-functions, Dirichlet $L$-functions}

\maketitle

\baselineskip 15pt

\section{Introduction and the statement of main results} \label{sec-1}

Let ${\Bbb N}$ be the set of positive integers, 
${\Bbb Z}$ the ring of rational integers, ${\Bbb Q}$
the rational number field, ${\Bbb R}$ the real number field, ${\Bbb  C}$ the complex number field, respectively.  

The double zeta values are defined by
\begin{equation}
\zeta_2(k_1,k_2)=\sum_{1\leq m_1<m_2}\frac{1}{m_1^{k_1}m_2^{k_2}}=\sum_{m=1}^\infty \sum_{n=1}^\infty \frac{1}{m^{k_1}(m+n)^{k_2}} \label{e-1-1}
\end{equation}
for $k_1,k_2\in \mathbb{N}$ with $k_2>1$. This was first considered by Euler. Hence this is sometimes called the Euler sum. It is well-known that the notion of \eqref{e-1-1} was extended to the multiple zeta value (MZV):
\begin{equation}
\zeta_r(k_1,k_2,\ldots,k_r)=\sum_{1\leq m_1<m_2<\cdots <m_r} \frac{1}{m_1^{k_1}\cdots m_r^{k_r}} \label{e-1-2}
\end{equation}
for $k_1,k_2,\ldots,k_r\in \mathbb{N}$ with $k_r>1$, defined by Hoffman \cite{Ho} and Zagier \cite{Za}. Moreover a lot of interesting relation formulas among MZVs have been discovered and the structure of the $\mathbb{Q}$-algebra generated by MZVs has been actively studied (see, for example, \cite{BB0,BB2,Gon,Ho3,Kaneko,Tera}). 

One of the famous formulas among MZVs is the sum formula. As for the double zeta values, it can be expressed as 
\begin{equation}
\sum_{j=2}^{l-1}\zeta_2(l-j,j)=\zeta(l) \qquad (l\in \mathbb{N};\, l\geq 3), \label{sumformula}
\end{equation}
where $\zeta(s)$ is the Riemann zeta-function, which was first proved by Euler.  

In recent days, further studies on sum formulas have been actively pursued
by many mathematicians.
In \cite{GKZ}, Gangl, Kaneko and Zagier obtained the following formulas, which
``divide'' \eqref{sumformula} for even $l$ into two parts: 
\begin{equation}
\begin{split}
& \sum_{\nu=1}^{N-1} \zeta_2(2N-2\nu,2\nu)=\frac{3}{4}\zeta(2N);\\
& \sum_{\nu=1}^{N-1} \zeta_2(2N-2\nu-1,2\nu+1)=\frac{1}{4}\zeta(2N)
\end{split}
\label{F-GKZ}
\end{equation}
for $N\in \mathbb{N}$ with $N\geq 2$.
Ohno and Zudilin \cite{O-Zu} showed the weighted sum formula
\begin{equation}
\sum_{\nu=2}^{l-1}2^\nu \zeta_2(l-\nu,\nu)=(l+1)\zeta(l) \quad (l\geq 3), \label{F-OZu}
\end{equation}
and Guo and Xie extended \eqref{F-OZu} to that 
for multiple zeta values in \cite{Guo-Xie}. 
Nakamura \cite{Na-Sh} gave the following analogues of \eqref{F-GKZ} and \eqref{F-OZu}:
\begin{align}
& \sum_{\nu=1}^{N-1} \left( 4^\nu+4^{N-\nu}\right) \zeta_2(2N-2\nu,2\nu)=\left( N+\frac{4}{3}+\frac{2}{3}4^{N-1}\right)\zeta(2N); \label{Naka-1}\\
& \sum_{\nu=2}^{M-2} (2\nu-1)(2M-2\nu-1)\zeta_2(2M-2\nu,2\nu)=\frac{3}{4}(M-3)\zeta(2M) \label{Naka-2}
\end{align}
for $N,M\in \mathbb{N}$ with $N\geq 2$ and $M\geq 4$.

The first purpose of the present paper is to give an extension of \eqref{sumformula} 
to double polylogarithms of Hurwitz type with a shifting parameter $b$ as follows. 
Note that the empty sum is to be 
understood as $0$. 

\begin{theorem} \label{Main-T}
For $k\in \mathbb{N}$, $x\in \mathbb{C}$ with $|x|\leq 1$ and $b\in (0,1]$,
\begin{align}
& \cos (b\pi) \bigg\{ \sum_{m\geq 1 \atop n\geq 0}\frac{x^n}{m(m+n+b)^{k+1}}-\sum_{m\geq 1 \atop n\geq 0}\frac{x^{m+n}}{m(m+n+b)^{k+1}}\label{SF-Hurwitz}\\
& \quad +\sum_{\nu=2}^{k+1} \sum_{m\geq 1 \atop n\geq 0}\frac{x^n}{(n+b)^{k+2-\nu}(m+n+b)^\nu}-\sum_{m\geq 1 \atop n\geq 0}\frac{x^{m+n}}{(n+b)(m+n+b)^{k+1}}\notag\\
& \quad + \sum_{m> b \atop n\geq 0}\frac{x^n}{(m-b)(m+n)^{k+1}}+\sum_{\nu=2}^{k+1}\sum_{m> b \atop n\geq 0}\frac{x^n}{(n+b)^{k+2-\nu}(m+n)^\nu}\bigg\}\notag \\
& -\frac{\sin (b\pi)}{\pi} \bigg\{ \sum_{m\geq 1 \atop n\geq 0}\frac{x^n}{m(m+n+b)^{k+2}}-\sum_{m\geq 1 \atop n\geq 0}\frac{x^{m+n}}{m(m+n+b)^{k+2}}\notag\\
& \quad +\sum_{\nu=1}^{k} \sum_{m\geq 1 \atop n\geq 0}\frac{x^n}{(n+b)^\nu(m+n+b)^{k+3-\nu}}-\sum_{m\geq 1 \atop n\geq 0}\frac{x^{m+n}}{(n+b)(m+n+b)^{k+2}}\notag\\
& \quad -\sum_{m\geq 1 \atop n\geq 0}\frac{x^{m+n}}{(n+b)^2(m+n+b)^{k+1}}-\sum_{m> b \atop n\geq 0}\frac{x^n}{(m-b)^2(m+n)^{k+1}}\notag\\
& \quad-k\sum_{m> b \atop n\geq 0}\frac{x^n}{(m-b)(m+n)^{k+2}}-\sum_{\nu=1}^{k}\nu\sum_{m> b \atop n\geq 0}\frac{x^n}{(n+b)^{k+1-\nu}(m+n)^{\nu+2}}\bigg\}\notag\\
&\ =\pi \sin (b\pi) \sum_{n\geq 0} \frac{x^n}{(n+b)^{k+1}}+2\cos (b\pi) \sum_{n\geq 0} \frac{x^n}{(n+b)^{k+2}}\notag\\
& \quad -\frac{2\sin (b\pi)}{\pi}\sum_{n\geq 0} \frac{x^n}{(n+b)^{k+3}}. \notag
\end{align}
\end{theorem}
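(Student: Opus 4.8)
The plan is to obtain Theorem~\ref{Main-T} by specialising a single ``master'' functional relation for double polylogarithms of Hurwitz type, valid for a complex exponent $s$ in some half-plane, and then setting $s=k$. Throughout, write $\Phi(s;x,a)=\sum_{n\geq 0}x^{n}/(n+a)^{s}$ for the Hurwitz--Lerch zeta-function and $\mathrm{Li}_{s}(x)=\sum_{m\geq 1}x^{m}/m^{s}$ for the polylogarithm. The left side of \eqref{SF-Hurwitz} is built from two ``shapes'' of double series, namely $\sum_{m\geq 1,\,n\geq 0}x^{n}\big/\big(m^{a}(m+n+b)^{c}\big)$ and $\sum_{m\geq 1,\,n\geq 0}x^{n}\big/\big((n+b)^{a}(m+n)^{c}\big)$, together with their $x^{m+n}$-weighted variants, so the theorem is a linear relation among these double series, the single Lerch values $\Phi(k+1;x,b)$, $\Phi(k+2;x,b)$, $\Phi(k+3;x,b)$, and the trigonometric factors $\cos b\pi$ and $\sin b\pi$. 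I would first prove the identity for $|x|<1$, where every series converges absolutely, and then let $|x|\to 1$: since all exponents are $\geq k+1\geq 2$, both sides are continuous on the closed unit disc, so Abel's theorem extends the equality to $|x|\leq 1$.

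To produce the master relation I would use a Mellin--Barnes contour integral. From the classical formula $(m+n+b)^{-s}=\tfrac1{2\pi i}\int_{(c)}\frac{\Gamma(s+z)\Gamma(-z)}{\Gamma(s)}(n+b)^{z}m^{-s-z}\,dz$ with $-\mathrm{Re}\,s<c<0$, multiply by the appropriate weight, sum over $m\geq 1$ and $n\geq 0$, and recognise the outcome as $\tfrac1{2\pi i}\int_{(c)}\frac{\Gamma(s+z)\Gamma(-z)}{\Gamma(s)}\,Z(s+z)\,\Phi(-z;x,b)\,dz$, where $Z$ is $\zeta$ or a polylogarithm according to the weight on $m$. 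Moving the contour to the right crosses the poles of $\Gamma(-z)$ at $z=0,1,2,\dots$, whose residues reproduce the ``second-shape'' double series $\sum_{\nu}\sum_{m,n}x^{n}(n+b)^{\nu}m^{-s-\nu}$ together with the diagonal $\mathrm{Li}\cdot\Phi$ terms; moving it to the left crosses the poles of $\Gamma(s+z)$ at $z=-s,-s-1,\dots$, where the reflection formula $\Gamma(w)\Gamma(1-w)=\pi/\sin\pi w$ turns the residues into the single Lerch values and simultaneously produces the coefficients $\cos b\pi$ and $-\sin b\pi/\pi$; the factors $\pi$, $2$, $k$ and $\nu$, as well as the $(n+b)^{-2}$, $(m-b)^{-2}$ and $\nu$-weighted terms, come out of differentiating $z\mapsto(n+b)^{z}$ and the $\Gamma$-quotient at the double and triple poles. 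The double series with $m-b$ and $m+n$ in their denominators then arises by carrying out the same computation with the roles of the two summation variables reversed, which is the analytic shadow of the symmetry $b\leftrightarrow 1-b$ inside $\pi/\sin\pi b$; summing the two computations yields the master relation. (As an independent check I would keep in reserve a more elementary route: iterate the partial-fraction identity $\tfrac1{m(m+n+b)}=\tfrac1{n+b}\big(\tfrac1m-\tfrac1{m+n+b}\big)$ to lower the weight, then feed in the reflection formula $\psi(1-b)-\psi(b)=\pi\cot\pi b$ and its derivatives.)

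Finally I would set $s=k\in\mathbb N$, rename the exponents arising as $k+1$, $k+2$, $k+3$, and reorganise; the finite inner sums $\sum_{\nu=2}^{k+1}$ and $\sum_{\nu=1}^{k}$ in \eqref{SF-Hurwitz} are precisely the residue sums, truncated by the condition that the exponent in question stay $\geq 1$. The principal obstacle is not conceptual but bookkeeping: one must track with care which double series has which shape ($m+n+b$ versus $m+n$), which summation range ($m\geq 1$ versus $m>b$, always $n\geq 0$), and how the $\nu$-indices are shifted, so that each term on the left and each of the three on the right acquires the correct sign and coefficient; on the analytic side, the contour shifts and the interchanges of sum and integral must be justified by Stirling's bound for the $\Gamma$-quotient in vertical strips, which is routine for $|x|<1$ and transported to the boundary by the Abel argument above. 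As a running consistency test I would examine the limit $b\to 1^{-}$: there $\cos b\pi\to-1$ and $\sin b\pi\to 0$, while the $m=1$ contributions to the $m>b$ double sums individually diverge like $(1-b)^{-1}$ and $(1-b)^{-2}$; these divergences must cancel between the two braces, and the relation that survives, taken at $x=1$, must coincide with the classical sum formula \eqref{sumformula} --- a final check on every sign and coefficient.
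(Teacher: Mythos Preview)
Your overall plan---derive a master functional relation valid for a complex exponent $s$, then specialise to $s=k$ and decompose by partial fractions---matches the paper's architecture. The paper's master relation (its Theorem~\ref{T-2-1}) is obtained, however, by a completely different device: one starts from the elementary Fourier identity $\sum_{m\geq 1}(-1)^{m}\sin(m\theta)/m=-\theta/2$ on $(-\pi,\pi)$, multiplies it by the absolutely convergent Lerch-type series $\sum_{n\geq 0}(-1)^{n}x^{n}e^{i(n+b)\theta}/(n+b)^{s}$, and then integrates $\theta\cdot(\text{result})$ over $[-\pi,\pi]$. Those integrals $\int_{-\pi}^{\pi}\theta\,e^{i(\pm m+n+b)\theta}\,d\theta$ are what produce the factors $\cos b\pi$ and $\sin b\pi$. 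After splitting the resulting double sum according to the sign of $n+b-m$ (whence the $m>b$ range and the $m-b$ denominators), one has Theorem~\ref{T-2-1}; setting $s=k$ and iterating the identity $\frac{1}{XY}=\big(\frac{1}{X}+\frac{1}{Y}\big)\frac{1}{X+Y}$ then gives Theorem~\ref{Main-T}. The bookkeeping for the $(m-b)^{-2}$ term is done by a telescoping argument on auxiliary sums $S_{\mu}$.

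The concrete gap in your Mellin--Barnes proposal is the origin of the trigonometric factors in $b$. In the integral $\frac{1}{2\pi i}\int_{(c)}\frac{\Gamma(s+z)\Gamma(-z)}{\Gamma(s)}(n+b)^{z}m^{-s-z}\,dz$, the parameter $b$ enters only through the base $(n+b)^{z}$; the poles and residues of the $\Gamma$-quotient depend on $s$ and $z$. Applying the reflection formula at $z=-s-j$ yields $\sin\pi s$ factors, never $\sin\pi b$ or $\cos\pi b$. So the mechanism you describe cannot manufacture the coefficients that appear in \eqref{SF-Hurwitz}; to get them inside a Mellin--Barnes framework you would need to feed in the Lerch functional equation for $\Phi(-z;x,b)$ (which does carry $e^{2\pi i b\,\cdot}$ phases), and that is a different and considerably more delicate computation than the one you sketch. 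Your backup route via partial fractions and the digamma reflection $\psi(1-b)-\psi(b)=\pi\cot\pi b$ is closer to the mark---the $m>b$ sums are indeed Hurwitz sums shifted by $1-b$---but as written it is a remark, not an argument. Your final consistency check at $b\to 1^{-}$ is correct in spirit and is carried out explicitly in the paper (Lemma~\ref{L-2-1}), where the individually singular pieces are shown to combine into a function holomorphic near $b=1$.
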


The proof of this theorem will be given in Section \ref{sec-2}.
It is to be stressed that the existence of the parameter $b$ is important for
applications (see Theorem \ref{C-1-4}).

In the case $b=1$, multiplying by $x$ the both sides of \eqref{SF-Hurwitz}, we have the following.

\begin{corollary} \label{C-1-2}
For $k \in \mathbb{N}$ and $x\in \mathbb{C}$ with $|x|\leq 1$,
\begin{align}                                                                  
& \sum_{m\geq 1 \atop n\geq 1}\frac{x^n}{m(m+n)^{k+1}}-\sum_{m\geq 1 \atop n\geq 1}\frac{x^{m+n}}{m(m+n)^{k+1}}+\sum_{\nu=2}^{k+1}\sum_{m\geq 1 \atop n\geq 1}\frac{x^n}{n^{k+2-\nu}(m+n)^{\nu}}\label{SF-EMT}\\
& \quad ={\rm Li}(k+2;x),\notag
\end{align}
where ${\rm Li}(s;x)=\sum_{n\geq 1}\, {x^n}{n^{-s}}$.
\end{corollary}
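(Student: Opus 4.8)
The plan is to derive Corollary~\ref{C-1-2} straight from Theorem~\ref{Main-T} by specialising the parameter at $b=1$. Since $\cos\pi=-1$ and $\sin\pi=0$, every term of \eqref{SF-Hurwitz} carrying the factor $\sin(b\pi)$ disappears from the identity (in particular the entire second large brace, together with the first and third terms on the right), while $\cos(b\pi)=-1$ simply pulls out of the first brace; the right-hand side collapses to $-2\sum_{n\geq0}x^n/(n+1)^{k+2}$. Hence at $b=1$ the identity \eqref{SF-Hurwitz} says precisely that the content of the first large brace, evaluated at $b=1$, equals $2\sum_{n\geq0}x^n/(n+1)^{k+2}$.

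Next I would simplify that brace. The key observation is that, at $b=1$, the two sums ranging over $m>b$, namely $\sum_{m\geq2,\,n\geq0}\frac{x^n}{(m-1)(m+n)^{k+1}}$ and $\sum_{\nu=2}^{k+1}\sum_{m\geq2,\,n\geq0}\frac{x^n}{(n+1)^{k+2-\nu}(m+n)^{\nu}}$, become under the shift $m\mapsto m+1$ exactly the first sum $\sum_{m\geq1,\,n\geq0}\frac{x^n}{m(m+n+1)^{k+1}}$ and the third sum $\sum_{\nu=2}^{k+1}\sum_{m\geq1,\,n\geq0}\frac{x^n}{(n+1)^{k+2-\nu}(m+n+1)^{\nu}}$ appearing in the same brace. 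Thus those two contributions merely double, and the $b=1$ identity reduces to
\begin{align*}
&2\sum_{m\geq1 \atop n\geq0}\frac{x^n}{m(m+n+1)^{k+1}}-\sum_{m\geq1 \atop n\geq0}\frac{x^{m+n}}{m(m+n+1)^{k+1}}\\
&\qquad+2\sum_{\nu=2}^{k+1}\sum_{m\geq1 \atop n\geq0}\frac{x^n}{(n+1)^{k+2-\nu}(m+n+1)^{\nu}}-\sum_{m\geq1 \atop n\geq0}\frac{x^{m+n}}{(n+1)(m+n+1)^{k+1}}=2\sum_{n\geq0}\frac{x^n}{(n+1)^{k+2}}.
\end{align*}

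Finally I would multiply both sides by $x$ and reindex $n\mapsto n-1$, so that every range $n\geq0$ turns into $n\geq1$ and each $(n+1)$ turns into $n$; the right-hand side then becomes $2\sum_{n\geq1}x^n/n^{k+2}=2\,{\rm Li}(k+2;x)$. The two ``diagonal'' sums on the left turn into $-\sum_{m,n\geq1}\frac{x^{m+n}}{m(m+n)^{k+1}}$ and $-\sum_{m,n\geq1}\frac{x^{m+n}}{n(m+n)^{k+1}}$, which coincide by the symmetry $m\leftrightarrow n$, hence together contribute $-2\sum_{m,n\geq1}\frac{x^{m+n}}{m(m+n)^{k+1}}$. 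Collecting terms, the left-hand side is exactly twice the left-hand side of \eqref{SF-EMT}, and dividing by $2$ finishes the proof.

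There is no serious obstacle here: the argument is essentially bookkeeping. The only point that deserves a word of care is the legitimacy of rearranging these series when $|x|=1$, where convergence may only be conditional, but this is already covered by the convergence analysis underlying the proof of Theorem~\ref{Main-T}. The two genuinely substantive steps are the shift $m\mapsto m+1$ that absorbs the $m>b$ sums into the $m\geq1$ sums, and the $m\leftrightarrow n$ symmetry of the diagonal sums.
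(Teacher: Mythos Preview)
Your proof is correct and follows essentially the same approach as the paper, which derives Corollary~\ref{C-1-2} from Theorem~\ref{Main-T} by setting $b=1$ and multiplying by $x$. You have simply spelled out the bookkeeping---the shift identifying the $m>b$ sums with $A_1$ and $A_3$, and the $m\leftrightarrow n$ symmetry merging the two diagonal sums---that the paper leaves implicit.
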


In particular when $x=1$, \eqref{SF-EMT} implies the sum formula \eqref{sumformula}. 
Hence \eqref{SF-Hurwitz} is a generalization of \eqref{sumformula}.
Note that this corollary was already obtained by Essouabri and the authors in 
\cite[Theorem 3]{EMT}.   In this sense, the present paper is a continuation of
\cite{EMT}.

Moreover we give 
certain sum formulas for double $L$-values as follows. For any Dirichlet character $\chi$ of conductor $f$, we denote $\chi^{-1}$ by $\overline{\chi}$ and let $\tau(\overline{\chi})=\sum_{a=1}^{f}\overline{\chi}(a)e^{2\pi i a/f}$ be the Gauss sum. Then, multiplying by $\overline{\chi}(a)$ the both sides of \eqref{SF-EMT} with $x=e^{2\pi i a/f}$, summing up with $a=1,\ldots,f$, and using 
$$\chi(n)=\frac{1}{\tau(\overline{\chi})}\sum_{a=1}^{f}\overline{\chi}(a)e^{2\pi i an/f}\qquad (n\in \mathbb{N})$$
(see \cite[Lemma 4.7]{Wa}), we obtain the following. 

\begin{corollary} \label{C-1-3}
For $k \in \mathbb{N}$,
\begin{align}                                                                  
& \sum_{\nu=2}^{k+1}L_2^{\sh}(k+2-\nu,\nu;\chi,\chi_0)+L_2^{\sh}(1,k+1;\chi_0,\chi)-L_2^{\ast}(1,k+1;\chi_0,\chi) \label{sumf-L}\\
& \quad =L(k+2;\chi),\notag
\end{align}
where $\chi_0$ is the trivial character, $L(s;\chi)=\sum_{n\geq 1}\chi(n)n^{-s}$ is the Dirichlet $L$-function and
\begin{align}                                                                  
& L_2^{\ast}(s_1,s_2;\psi_1,\psi_2)=\sum_{1\leq m_1<m_2}\frac{\psi_1(m_1)\psi_2(m_2)}{m_1^{s_1}m_2^{s_2}}=\sum_{m\geq 1 \atop n\geq 1}\frac{\psi_1(m)\psi_2(m+n)}{m^{s_1}(m+n)^{s_2}}, \label{def-L-ast}\\
& L_2^{\sh}(s_1,s_2;\psi_1,\psi_2)=\sum_{1\leq m_1<m_2}\frac{\psi_1(m_1)\psi_2(m_2-m_1)}{m_1^{s_1}m_2^{s_2}}=\sum_{m\geq 1 \atop n\geq 1}\frac{\psi_1(m)\psi_2(n)}{m^{s_1}(m+n)^{s_2}} \label{def-L-sh}
\end{align}
are double $L$-functions for Dirichlet characters $\psi_1,\psi_2$, which were first studied in \cite{AK04}.
\end{corollary}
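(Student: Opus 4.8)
The plan is to derive Corollary \ref{C-1-3} from Corollary \ref{C-1-2} by the Gauss-sum averaging procedure already sketched in the excerpt, making each step precise. First I would fix a primitive (or at least any) Dirichlet character $\chi$ of conductor $f$, set $x=e^{2\pi i a/f}$ for $a=1,\dots,f$ — note $|x|=1$, so Corollary \ref{C-1-2} applies — multiply both sides of \eqref{SF-EMT} by $\overline{\chi}(a)$, and sum over $a=1,\dots,f$. The right-hand side becomes $\sum_{a=1}^{f}\overline{\chi}(a){\rm Li}(k+2;e^{2\pi i a/f})=\sum_{n\geq 1}n^{-(k+2)}\sum_{a=1}^{f}\overline{\chi}(a)e^{2\pi i an/f}=\tau(\overline{\chi})\sum_{n\geq 1}\chi(n)n^{-(k+2)}=\tau(\overline{\chi})L(k+2;\chi)$, using the stated identity $\chi(n)\tau(\overline{\chi})=\sum_{a=1}^{f}\overline{\chi}(a)e^{2\pi i an/f}$. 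After dividing through by $\tau(\overline{\chi})$ (which is nonzero), the right side is exactly $L(k+2;\chi)$.

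Next I would handle the left-hand side term by term, in each case interchanging the finite $a$-sum with the absolutely convergent double series and applying the same character identity, but now with argument $n$ replaced by the appropriate linear form in the summation indices. In the first term $\sum_{m,n\geq 1}x^n\,m^{-1}(m+n)^{-(k+1)}$, the exponent of $x=e^{2\pi i a/f}$ is $n$, so averaging against $\overline{\chi}(a)/\tau(\overline{\chi})$ produces the factor $\chi(n)$, and since $m+n\geq m_2$ with $m=m_1$, this term becomes $\sum_{m,n\geq 1}\chi(n)\,m^{-1}(m+n)^{-(k+1)}$; comparing with \eqref{def-L-sh} with $\psi_1=\chi_0$ (trivial, so $\chi_0(m)=1$ for all $m$) and $\psi_2=\chi$, this is $L_2^{\sh}(1,k+1;\chi_0,\chi)$. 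In the second term the exponent of $x$ is $m+n$, so averaging yields $\chi(m+n)$, giving $\sum_{m,n\geq 1}\chi(m+n)\,m^{-1}(m+n)^{-(k+1)}$, which by \eqref{def-L-ast} with $\psi_1=\chi_0$, $\psi_2=\chi$ equals $L_2^{\ast}(1,k+1;\chi_0,\chi)$; note the minus sign in \eqref{SF-EMT} carries over to the minus sign in \eqref{sumf-L}. In the double sum $\sum_{\nu=2}^{k+1}\sum_{m,n\geq 1}x^n\,n^{-(k+2-\nu)}(m+n)^{-\nu}$ the exponent of $x$ is again $n$, so averaging gives $\chi(n)$, producing $\sum_{\nu=2}^{k+1}\sum_{m,n\geq 1}\chi(n)\,n^{-(k+2-\nu)}(m+n)^{-\nu}$; here the character sits on the \emph{inner} index $n$ rather than on $m_1$, so by \eqref{def-L-sh} this matches $\sum_{\nu=2}^{k+1}L_2^{\sh}(k+2-\nu,\nu;\chi,\chi_0)$ (now $\psi_1=\chi$, $\psi_2=\chi_0$). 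Assembling the three pieces gives precisely \eqref{sumf-L}.

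A few technical points need care but no deep ideas: the interchange of the finite sum over $a$ with the infinite double series is justified because each double series in \eqref{SF-EMT} converges absolutely for $|x|=1$ (indeed for $|x|\le 1$ as asserted in Corollary \ref{C-1-2}, with $k\ge 1$ so that $k+1\ge 2$ and $\nu\ge 2$ throughout), and the subsequent rearrangement $\sum_a\overline{\chi}(a)\sum_{m,n}(\cdots)=\sum_{m,n}(\cdots)\sum_a\overline{\chi}(a)e^{2\pi i a\ell/f}$ with $\ell=n$ or $\ell=m+n$ is then a finite manipulation; the resulting series $\sum\chi(\ell)(\cdots)$ again converges absolutely because $|\chi|\le 1$. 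One should also note that the identity $\chi(n)\tau(\overline{\chi})=\sum_a\overline{\chi}(a)e^{2\pi i an/f}$ quoted from \cite[Lemma 4.7]{Wa} is valid for all $n\in\mathbb{N}$ only under a suitable hypothesis (e.g. $\chi$ primitive), and more generally one works with primitive characters or inserts the standard correction; but this is exactly the context in which \cite{AK04} defines the double $L$-functions, so no new issue arises. The only mildly delicate bookkeeping — and the step I expect to demand the most attention — is keeping straight, in each of the three terms, \emph{which} summation variable the character ends up attached to (the outer index $m=m_1$, the inner index $n=m_2-m_1$, or the sum $m+n=m_2$), since this is what dictates whether the result is an $L_2^{\sh}$ or an $L_2^{\ast}$ value and in which slot the nontrivial character sits; matching the signs from \eqref{SF-EMT} to those in \eqref{sumf-L} is then automatic. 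Finally, dividing the whole averaged identity by $\tau(\overline{\chi})$ completes the proof.
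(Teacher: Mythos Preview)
Your proposal is correct and follows precisely the approach the paper itself uses: multiply \eqref{SF-EMT} at $x=e^{2\pi i a/f}$ by $\overline{\chi}(a)$, sum over $a$, and apply the Gauss-sum identity $\chi(n)=\tau(\overline{\chi})^{-1}\sum_a\overline{\chi}(a)e^{2\pi i an/f}$. You have simply made explicit the term-by-term bookkeeping (including the harmless relabeling $m\leftrightarrow n$ needed to recognize $L_2^{\sh}(k+2-\nu,\nu;\chi,\chi_0)$) that the paper leaves to the reader.
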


Note that recently another new approach to \eqref{sumf-L} has been given by Kawashima, Tanaka and Wakabayashi in \cite{KTW}. In fact, they studied cyclic sum formulas for multiple $L$-values including \eqref{sumf-L}.

In order to prove Theorem \ref{Main-T}, we give functional relations for 
certain double polylogarithms of Hurwitz type (see Theorem \ref{T-2-1}), which is a generalization of our previous result (see \cite[Theorem 2.1]{MT-Siau}). From these functional relations, we can obtain Theorem \ref{Main-T} by using the technique of partial fraction decompositions. This method was already introduced in our previous work (see \cite[Remark 2.4]{MT-Quart}; also \cite[Section 7]{EMT}). 

It should be emphasized that \eqref{SF-Hurwitz} holds as a functional relation among holomorphic functions for $b\in D_\varepsilon(1)$, where we fix a sufficiently small $\varepsilon>0$ and let
\begin{equation}
D_\varepsilon(1)=\{ b\in \mathbb{C}\,|\, |b-1|<\varepsilon\}  \label{def-D-1}
\end{equation}
(see Lemma \ref{L-2-1}). 
Therefore it is possible to differentiate \eqref{SF-Hurwitz} with respect to the shifting parameter $b$ to obtain various new formulas. 
By differentiating \eqref{SF-Hurwitz} in $b$, we obtain, for example, the following theorem. The proof will be given in Section \ref{sec-2-2}.


\begin{theorem} \label{C-1-4}
For $k \in \mathbb{N}$ and $x\in \mathbb{C}$ with $|x|\leq 1$,
\begin{align}                                                                   & \sum_{\nu=2}^{k+1} \nu\,\sum_{m\geq 1 \atop n\geq 1}\frac{x^n}{n^{k+2-\nu}(m+n)^{\nu+1}}+ (k+1)\sum_{m\geq 1 \atop n\geq 1}\frac{x^n}{m(m+n)^{k+2}}\label{AK-poly}\\
& \quad \ +\sum_{m\geq 1 \atop n\geq 1}\frac{x^n}{m^2(m+n)^{k+1}}-\sum_{m\geq 1 \atop n\geq 1}\frac{x^{m+n}}{m^2(m+n)^{k+1}}=\sum_{n\geq 1} \frac{x^n}{n^{k+3}}.\notag
\end{align}
\end{theorem}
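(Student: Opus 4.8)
The plan is to differentiate the identity \eqref{SF-Hurwitz} of Theorem~\ref{Main-T} with respect to the shifting parameter $b$ and then put $b=1$. By Lemma~\ref{L-2-1}, \eqref{SF-Hurwitz} is an identity of functions holomorphic in $b$ on the disc $D_\varepsilon(1)$, so both the term-by-term differentiation in $b$ and the subsequent specialization $b=1$ are legitimate; afterwards I would tidy up the outcome by multiplying through by $x$.

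The elementary input is $\cos(\pi b)|_{b=1}=-1$, $\sin(\pi b)|_{b=1}=0$, $\tfrac{d}{db}\cos(\pi b)|_{b=1}=-\pi\sin\pi=0$ and $\tfrac{d}{db}\sin(\pi b)|_{b=1}=\pi\cos\pi=-\pi$. Hence, after differentiating and setting $b=1$: a summand $f(b)$ sitting inside the brace multiplied by $\cos(\pi b)$ contributes only $-f'(1)$; a summand $g(b)$ inside the brace multiplied by $-\sin(\pi b)/\pi$ contributes only $g(1)$; and the three terms on the right of \eqref{SF-Hurwitz} contribute $-\pi^2\sum_{n\ge0}x^n(n+1)^{-(k+1)}$, $\;2(k+2)\sum_{n\ge0}x^n(n+1)^{-(k+3)}$ and $2\sum_{n\ge0}x^n(n+1)^{-(k+3)}$. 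Differentiating any of the regular double series in $b$ merely lowers one exponent and brings down the matching integer, e.g.\ $\partial_b(m+n+b)^{-\nu}=-\nu(m+n+b)^{-\nu-1}$, $\partial_b(n+b)^{-\mu}=-\mu(n+b)^{-\mu-1}$, $\partial_b(m-b)^{-1}=(m-b)^{-2}$.

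The only delicate point is the summand $m=1$ in the three series $\sum_{m>b}x^n((m-b)(m+n)^{k+1})^{-1}$, $\sum_{m>b}x^n((m-b)^2(m+n)^{k+1})^{-1}$ and $\sum_{m>b}x^n((m-b)(m+n)^{k+2})^{-1}$: this summand is singular at $b=1$ and must not be differentiated in isolation. I would peel these three $m=1$ summands off and combine them, together with their trigonometric prefactors, into the single expression $\bigl(\cos(\pi b)+\tfrac{\sin(\pi b)}{\pi(1-b)}\bigr)\tfrac{\Lambda_1}{1-b}+\tfrac{k\sin(\pi b)}{\pi(1-b)}\Lambda_2$, where $\Lambda_j:=\sum_{n\ge0}x^n(n+1)^{-(k+j)}$, and expand it in powers of $b-1$ to first order. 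The simple poles $1/(1-b)$ cancel — this is exactly the holomorphy guaranteed by Lemma~\ref{L-2-1} — and the remaining finite part contributes both to the value and to the $b$-derivative at $b=1$; in particular it is this finite part that produces the $\pi^2$-multiple of $\Lambda_1$ on the left. Since the target identity \eqref{AK-poly} carries no $\pi$ at all, the $\pi^2$-contributions from the two sides of \eqref{SF-Hurwitz} must in the end cancel, and one checks from the Laurent expansion that they do.

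With all contributions collected, what remains is cosmetic reorganization: multiply the identity by $x$ and relabel $n+1$ as $n$ (and, in the series arising from $\sum_{m>b}$, also $m-1$ as $m$), turning every $\sum_{m\ge1,n\ge0}$ into the $\sum_{m\ge1,n\ge1}$ appearing in \eqref{AK-poly}; then the numerous double series cancel in pairs — several of them only after using the symmetry $m\leftrightarrow n$ in the diagonal series $\sum_{m,n\ge1}x^{m+n}/(\cdots)$ — and the single series collapse to $\sum_{n\ge1}x^n n^{-(k+3)}$, giving \eqref{AK-poly}. I expect the main obstacle to be precisely this bookkeeping: expanding the grouped singular expression correctly, confirming the cancellation of its pole and of the spurious $\pi^2$- and polylogarithmic contributions, and then tracking the many pairwise cancellations among the double series. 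No new analytic input is needed: absolute convergence of all the series involved and the legitimacy of term-by-term differentiation in $b$ are exactly as in the proof of Theorem~\ref{Main-T}.
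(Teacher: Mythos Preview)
Your strategy---differentiate \eqref{SF-Hurwitz} in $b$, peel off the three singular $m=1$ summands and combine them with their trigonometric prefactors into the holomorphic block $g(b)$, then let $b\to1$ and multiply by $x$---is exactly the paper's, and your Laurent analysis of that block matches Lemma~\ref{L-2-1} and \eqref{def-g}.

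The gap is in what you call ``cosmetic reorganization.'' After differentiation you land on an identity with well over a dozen distinct double series (this is the paper's \eqref{equ-1}), and these do \emph{not} reduce to \eqref{AK-poly} by pairwise cancellation and the $m\leftrightarrow n$ symmetry of the $x^{m+n}$ series alone. The paper has to feed the already established sum formula \eqref{SF-EMT} (Corollary~\ref{C-1-2}) back in three separate times---once to collapse the block
\[
\sum_{m,n\ge1}\frac{x^n}{m(m+n)^{k+2}}-\sum_{m,n\ge1}\frac{x^{m+n}}{m(m+n)^{k+2}}+\sum_{\nu=1}^{k}\sum_{m,n\ge1}\frac{x^n}{n^{\nu}(m+n)^{k+3-\nu}}
\]
to a single polylogarithm, and twice more in the passage through the intermediate Proposition~\ref{P-3-1}. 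Moreover, the $\pi^2$ contributions do \emph{not} cancel between the two sides of the differentiated \eqref{SF-Hurwitz}: a residual $\zeta(2)\,\mathrm{Li}(k+1;x)$ survives into Proposition~\ref{P-3-1} and is eliminated only at the very end via the harmonic-product identity \eqref{equ-3},
\[
\zeta(2)\sum_{l\ge1}\frac{x^l}{l^{k+1}}=\sum_{m,n\ge1}\frac{x^{m+n}}{m^2(m+n)^{k+1}}+\sum_{m,n\ge1}\frac{x^{m}}{m^{k+1}(m+n)^{2}}+\sum_{n\ge1}\frac{x^n}{n^{k+3}}.
\]
You should list both \eqref{SF-EMT} and \eqref{equ-3} explicitly as inputs; without them the bookkeeping does not close, and in particular your claimed direct cancellation of the $\pi^2$ terms is not correct.
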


Similarly to Corollary \ref{C-1-3}, for a Dirichlet character $\chi$ of conductor $f$, multiplying by $\overline{\chi}(a)$ the both sides of \eqref{AK-poly} with $x=e^{2\pi i a/f}$, and summing up with $a=1,\ldots,f$, we obtain the following.

\begin{corollary} \label{C-3-1}
For $k \in \mathbb{N}$,
\begin{align}                                                                   & \sum_{\nu=2}^{k+1} \nu\,L_2^{\sh}(k+2-\nu,\nu+1;\chi,\chi_0)+(k+1)L_2^{\sh}(1,k+2;\chi_0,\chi)\label{AK-L-1}\\
& \quad +L_2^{\sh}(2,k+1;\chi_0,\chi)-L_2^{\ast}(2,k+1;\chi_0,\chi)=L(k+3;\chi)
.\notag
\end{align}
In particular,
\begin{align}       
\sum_{\nu=2}^{k} \nu\,\zeta_2(k+2-\nu,\nu+1) +2(k+1)\zeta_2(1,k+2)&=\zeta(k+3).  
\label{SF-nu}                                                                   \end{align}
\end{corollary}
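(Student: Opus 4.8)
The plan is to deduce \eqref{AK-L-1} from Theorem \ref{C-1-4} by exactly the character-twisting procedure that turns Corollary \ref{C-1-2} into Corollary \ref{C-1-3}, and then to specialize to the principal character in order to obtain \eqref{SF-nu}. First I would note that, since $k\geq 1$, every series appearing in \eqref{AK-poly} is absolutely convergent at $|x|=1$: in each of the five sums the combined exponent on $m+n$ is at least $3$, so summing first over the inner summation variable and then over the other yields a convergent series. Consequently, after setting $x=e^{2\pi i a/f}$, multiplying both sides of \eqref{AK-poly} by $\overline{\chi}(a)$, summing over $a=1,\dots,f$, and dividing by the Gauss sum $\tau(\overline{\chi})$, the finite sum over $a$ may be interchanged with each infinite sum without difficulty.

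I would then evaluate the twist of each term using $\frac{1}{\tau(\overline{\chi})}\sum_{a=1}^{f}\overline{\chi}(a)e^{2\pi i aN/f}=\chi(N)$ for $N\in\mathbb{N}$ (Washington \cite[Lemma 4.7]{Wa}, $\chi$ being primitive of conductor $f$). A monomial $x^{n}$ is converted into $\chi(n)$ and a monomial $x^{m+n}$ into $\chi(m+n)$, while the trivial character attached to the remaining summation variable is invisible since $\chi_0(N)=1$ for $N\geq1$. Thus $\sum_{m,n\geq1}x^{n}n^{\nu-k-2}(m+n)^{-\nu-1}$ becomes, after the harmless relabelling $m\leftrightarrow n$ of dummy variables, $L_2^{\sh}(k+2-\nu,\nu+1;\chi,\chi_0)$; the sums $\sum_{m,n\geq1}x^{n}m^{-1}(m+n)^{-k-2}$ and $\sum_{m,n\geq1}x^{n}m^{-2}(m+n)^{-k-1}$ become $L_2^{\sh}(1,k+2;\chi_0,\chi)$ and $L_2^{\sh}(2,k+1;\chi_0,\chi)$ directly from \eqref{def-L-sh}; the sum $\sum_{m,n\geq1}x^{m+n}m^{-2}(m+n)^{-k-1}$ becomes $L_2^{\ast}(2,k+1;\chi_0,\chi)$ after the substitution $(m_1,m_2)=(m,m+n)$ in \eqref{def-L-ast}; and the right-hand side $\sum_{n\geq1}x^{n}n^{-k-3}$ becomes $L(k+3;\chi)$. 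Assembling these identifications is precisely \eqref{AK-L-1}.

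For the last assertion I would take $\chi=\chi_0$, of conductor $1$, so that $L(k+3;\chi_0)=\zeta(k+3)$ and $L_2^{\sh}(s_1,s_2;\chi_0,\chi_0)=\zeta_2(s_1,s_2)$; moreover, since $\chi_0(N)=1$ for every $N\geq 1$, both $L_2^{\sh}(2,k+1;\chi_0,\chi_0)$ and $L_2^{\ast}(2,k+1;\chi_0,\chi_0)$ reduce to $\zeta_2(2,k+1)$, so the last two double-$L$ terms in \eqref{AK-L-1} cancel. Detaching the $\nu=k+1$ summand from $\sum_{\nu=2}^{k+1}\nu\,\zeta_2(k+2-\nu,\nu+1)$, which equals $(k+1)\zeta_2(1,k+2)$, and combining it with the $(k+1)\zeta_2(1,k+2)$ already present gives \eqref{SF-nu}. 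There is no real analytic obstacle in any of this; the only point that demands attention is the purely combinatorial bookkeeping of keeping track, after the $m\leftrightarrow n$ relabelling, of which inner variable carries the character, and hence of whether a given twisted series is to be matched against $L_2^{\sh}$ (when the exponent of $x$ was $n$) or against $L_2^{\ast}$ (when it was $m+n$) --- exactly the conventions already fixed in the derivation of \eqref{sumf-L}.
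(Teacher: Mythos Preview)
Your proof is correct and follows exactly the approach indicated in the paper: twist \eqref{AK-poly} by $\overline{\chi}(a)$ at $x=e^{2\pi i a/f}$, sum over $a$, and identify each resulting series with the appropriate $L_2^{\sh}$, $L_2^{\ast}$, or $L$ via the Gauss-sum identity, then specialize to $\chi=\chi_0$ and detach the $\nu=k+1$ term to obtain \eqref{SF-nu}. Your additional remarks on absolute convergence and on the $m\leftrightarrow n$ relabelling needed to match the first family of sums with $L_2^{\sh}(k+2-\nu,\nu+1;\chi,\chi_0)$ make the identifications fully explicit.
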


Note that \eqref{SF-nu} can also be derived from the result of Arakawa and Kaneko 
\cite[Corollary 11]{AK99} for MZVs (see Remark \ref{rem-AK}). 

The above corollaries imply that Theorem \ref{Main-T} is the source of various
sum formulas among double $L$-values and double polylogarithms. 
In fact, considering the derivatives of higher order, we can obtain other kinds of 
sum formulas 
among double $L$-values and double polylogarithms (see Remark \ref{Rem-3-4}).  
Therefore it is an important problem to generalize Theorem \ref{Main-T} to the
multiple case, aiming to extend the sum formula for MZVs.

Next we give a sum formula for the partial double polylogarithms modulo $N$. 
We insert this result in the present paper, because its proof shares some common 
features with that of Theorem \ref{Main-T}.
We will give its proof in Section \ref{sec-3}, where a shifting analogue will
also be presented (Theorem \ref{T-4-4}).

\begin{theorem} \label{T-4-1}
Let $N$ be any odd positive integer. For $k\in \mathbb{N}$ and $x\in \mathbb{C}$ with $|x|\leq 1$,
\begin{align}                                                                          
& \sum_{\nu=1}^{k} \bigg\{ \sum_{m,n\geq 1 \atop {m \equiv n\,(\text{mod\ }N)}}
\frac{x^n}{n^\nu(m+n)^{k+2-\nu}}\bigg\}+\sum_{m,n\geq 1 \atop {m \equiv n\,
(\text{mod\ }N)}}
\frac{x^n}{m(m+n)^{k+1}} \label{4-1}\\   
& \qquad +\sum_{\nu=1}^{k} \bigg\{ \sum_{m,n\geq 1 \atop {m \equiv -2n\,
(\text{mod\ }N)}} \frac{x^n}{n^\nu(m+n)^{k+2-\nu}}\bigg\}+ \sum_{m,n\geq 1 \atop {m \equiv -2n\,
(\text{mod\ }N)}} \frac{x^n}{m(m+n)^{k+1}}\notag\\
& \qquad - \sum_{m,n\geq 1 \atop {m \equiv -2n\,
(\text{mod\ }N)}} \frac{x^{m+n}}{m(m+n)^{k+1}}- \sum_{m,n\geq 1 \atop {m \equiv -2n\,
(\text{mod\ }N)}} \frac{x^{m+n}}{n(m+n)^{k+1}}\notag\\
& \qquad =\frac{2}{N^{k+2}}{\rm Li}(k+2;x^N)+\frac{\pi}{N}\sum_{m\geq 1\atop N\nmid m} 
\frac{x^m}{\sin(2m\pi/N)m^{k+1}}.\notag   
\end{align}
\end{theorem}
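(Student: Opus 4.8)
The plan is to derive Theorem~\ref{T-4-1} by essentially the same route that produces Theorem~\ref{Main-T}: start from the functional relations for double polylogarithms of Hurwitz type (Theorem~\ref{T-2-1}), specialize the shifting parameters to rational values of the form $a/N$, and then sum over the relevant residue classes modulo $N$ with suitable roots of unity in place of $x$. Concretely, I would take $b=a/N$ in the Hurwitz-type double polylogarithm identity, replace $x$ by $x\zeta_N^{c}$ for an appropriate character-like weight $\zeta_N = e^{2\pi i/N}$, and sum over $a$ (and possibly a second index) from $1$ to $N$. The elementary identity $\frac{1}{N}\sum_{a=1}^{N}\zeta_N^{a(m-n)} = 1$ if $m\equiv n\pmod N$ and $0$ otherwise is what converts the unrestricted double sums into the congruence-restricted sums $m\equiv n$ and $m\equiv -2n\pmod N$ appearing on the left of \eqref{4-1}; the two different congruence conditions will arise from the two ``halves'' (the $\cos(b\pi)$-bracket and the $\sin(b\pi)/\pi$-bracket, or rather their $b$-rational analogues) because the shift by $b$ inside one denominator becomes a shift by $a$ inside an index that gets absorbed differently in the two families of terms.

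The key steps, in order: (i) rewrite the relevant Hurwitz-type functional relation at $b=a/N$ so that all denominators $m+n+b$, $n+b$, $m-b$ become $\frac1N$ times integer denominators $N(m+n)+a$, etc., pulling out the powers of $N$; (ii) multiply by the correct power of $\zeta_N$ (to track the shift) and by $x^{(\text{something})}$, then sum over $a=1,\dots,N$; (iii) apply the orthogonality relation for $N$-th roots of unity to collapse each unrestricted double sum into a sum over one congruence class, identifying which class ($m\equiv n$ or $m\equiv -2n$) each term lands in; (iv) handle the right-hand side, where the single sums $\sum_{n}x^n/(n+b)^{k+j}$ at $b=a/N$, after the same rescaling and summation over $a$, produce both the term $\frac{2}{N^{k+2}}\mathrm{Li}(k+2;x^N)$ (from the residue class $N\mid m$, i.e. the ``diagonal'' contribution) and the cotangent/cosecant-type sum $\frac{\pi}{N}\sum_{N\nmid m} x^m/(\sin(2m\pi/N)m^{k+1})$; the appearance of $\sin(2m\pi/N)$ rather than $\sin(m\pi/N)$ signals that the $\sin(b\pi)$ and $\cos(b\pi)$ prefactors combine, via $\sum_a e^{2\pi i a m/N}\sin(a\pi/N)^{-1}$-type evaluations or via the reflection formula for the Hurwitz zeta-function, into exactly this cosecant with argument $2m\pi/N$. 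The hypothesis that $N$ is odd is presumably needed precisely here, to ensure $\sin(2m\pi/N)\ne 0$ for $N\nmid m$ and that the doubling $n\mapsto 2n$ is a bijection on residues mod $N$, which is what makes the $m\equiv -2n$ condition well-behaved.

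The main obstacle I expect is bookkeeping the right-hand side: tracking how the three terms $\pi\sin(b\pi)\sum x^n(n+b)^{-k-1}$, $2\cos(b\pi)\sum x^n (n+b)^{-k-2}$, $-\frac{2\sin(b\pi)}{\pi}\sum x^n(n+b)^{-k-3}$ of \eqref{SF-Hurwitz}, upon setting $b=a/N$, inserting the root-of-unity weight, and summing over $a$, reorganize into just the two terms on the right of \eqref{4-1}. This requires a clean evaluation of finite trigonometric sums of the shape $\sum_{a=1}^{N-1} \zeta_N^{am}\cot(a\pi/N)$ and $\sum_{a=1}^{N-1}\zeta_N^{am}/\sin(a\pi/N)$, and then recognizing that the surviving combination is the cosecant with doubled argument. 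The double-sum side is comparatively mechanical: each of the eight-or-so families of terms in \eqref{SF-Hurwitz} (at $b=a/N$) produces one congruence-restricted double sum, and matching up the $x^n$ versus $x^{m+n}$ weights and the exponents $\nu$, $k+2-\nu$ with the listed terms in \eqref{4-1} is routine if tedious. I would also need the convergence/holomorphy input (the analogue of Lemma~\ref{L-2-1}) to justify the term-by-term rearrangements and the interchange of the finite $a$-sum with the infinite $m,n$-sums, but since $N$ is a fixed integer the $a$-sum is finite and this causes no real difficulty.
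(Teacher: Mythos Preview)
Your proposal has a genuine gap: varying the Hurwitz shift $b=a/N$ in Theorem~\ref{T-2-1} (or in \eqref{SF-Hurwitz}) will not produce the congruence restrictions $m\equiv n$ and $m\equiv -2n\pmod N$ that appear in \eqref{4-1}. Setting $b=a/N$ moves the \emph{denominators} to $(Nn+a)$, $(N(m+n)+a)$, $(Nm-a)$; after rescaling this parametrizes a single residue class for the combined variable, but it does not introduce any root-of-unity weight on the free index $m$. The orthogonality relation $\frac1N\sum_a\zeta_N^{a(m-n)}$ that you want to invoke requires a factor $\zeta_N^{am}$ to be present, and there is no mechanism in Theorem~\ref{T-2-1} to generate it: substituting $x\to x\zeta_N^{c}$ only weights the $n$-index (or $m+n$ in the $x^{m+n}$ terms). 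In short, once the $\theta$-variable has been integrated out in passing from \eqref{FE-poly} to Theorem~\ref{T-2-1}, the handle that couples both summation indices to a discrete parameter is lost.

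The paper's route is different in exactly this respect. It returns to the $\theta$-dependent identity \eqref{FE-poly} with $b=1$ (not general $b$), integrates once in $\theta$ to obtain an identity that still carries the phase $e^{i(\pm m+n)\theta}$, and then substitutes the discrete values $\theta=2\pi a/N-\pi$ together with $x\to x\,\eta^{-2a}$, $\eta=e^{2\pi i/N}$. This produces weights $\eta^{a(m-n)}$ and $\eta^{-a(m+n)}$ on the two families of terms, and summing over $a=0,\dots,N-1$ with the elementary evaluations of $\sum_a\xi^{-an}$ and $\sum_a a\,\xi^{-an}$ (applied to $\xi=\eta$ and $\xi=\eta^{2}$, the latter requiring $N$ odd) yields the two congruence classes and the $\sin(2m\pi/N)^{-1}$ on the right. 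The $\sin(b\pi)$/$\cos(b\pi)$ prefactors play no role here; indeed the authors remark after the proof that extending Proposition~\ref{P-4-3} to general $b$ appears difficult, and they only carry out the special case $b=1/2$ separately. So your ``bookkeeping obstacle'' on the right-hand side is not the real issue; the real issue is that your proposed left-hand-side mechanism does not generate the needed orthogonality weights.
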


In particular when $N=1$, we have \eqref{SF-EMT}, because 
the second term on the right-hand side of \eqref{4-1} vanishes. In the case $N=3$ and $x=1$, we 
see that
$m \equiv -2n\,(\text{mod\ }3)$ implies $m \equiv n\,(\text{mod\ }3)$. Also we see 
that $\chi_3(m)=(2/\sqrt{3})\sin(2m\pi/3)$, where $\chi_3$ is the quadratic character of 
conductor $3$. Hence we have the following.

\begin{corollary} \label{C-4-2}
For $k\in \mathbb{N}$,
\begin{align}                                                                           
& \sum_{\nu=1}^{k} \bigg\{ \sum_{m,n\geq 1 \atop {m \equiv n\,(\text{mod\ }3)}}
\frac{1}{n^\nu(m+n)^{k+2-\nu}}\bigg\}=\frac{1}{3^{k+2}}\zeta(k+2)+\frac{\pi}
{3\sqrt{3}}L(k+1;\chi_3). \label{4-2}   
\end{align}
\end{corollary}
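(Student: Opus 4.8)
The plan is to obtain \eqref{4-2} as the special case $N=3$, $x=1$ of Theorem~\ref{T-4-1}, after exploiting the arithmetic of residues modulo $3$ together with the symmetry $m\leftrightarrow n$.

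First I would record that $-2\equiv 1\pmod 3$, so the two congruence conditions $m\equiv n\pmod 3$ and $m\equiv -2n\pmod 3$ occurring in \eqref{4-1} describe the very same set of pairs $(m,n)\in\mathbb{N}^2$. Putting $x=1$, the $\nu$-indexed double sum on the first line of \eqref{4-1} and the one on the second line therefore coincide, as do the two single sums $\sum\frac{1}{m(m+n)^{k+1}}$ appearing there; hence the left-hand side of \eqref{4-1} collapses to
\[
2\sum_{\nu=1}^{k}\sum_{m\equiv n\,(3)}\frac{1}{n^\nu(m+n)^{k+2-\nu}}
+2\sum_{m\equiv n\,(3)}\frac{1}{m(m+n)^{k+1}}
-\sum_{m\equiv n\,(3)}\frac{1}{m(m+n)^{k+1}}
-\sum_{m\equiv n\,(3)}\frac{1}{n(m+n)^{k+1}}.
\]
Since the condition $m\equiv n\pmod 3$ and the quantity $m+n$ are symmetric in $m$ and $n$, relabelling gives $\sum_{m\equiv n\,(3)}\frac{1}{n(m+n)^{k+1}}=\sum_{m\equiv n\,(3)}\frac{1}{m(m+n)^{k+1}}$, so the last three sums cancel and the left-hand side equals $2\sum_{\nu=1}^{k}\sum_{m\equiv n\,(3)}\frac{1}{n^\nu(m+n)^{k+2-\nu}}$.

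For the right-hand side of \eqref{4-1} with $N=3$, $x=1$, I would use ${\rm Li}(k+2;1)=\zeta(k+2)$ and the identity $\chi_3(m)=\tfrac{2}{\sqrt3}\sin(2m\pi/3)$ recorded above. Because $\chi_3$ takes only the values $0,\pm1$, and vanishes exactly when $3\mid m$, this identity yields $1/\sin(2m\pi/3)=\tfrac{2}{\sqrt3}\chi_3(m)$ for $3\nmid m$, whence
\[
\frac{\pi}{3}\sum_{m\geq 1,\;3\nmid m}\frac{1}{\sin(2m\pi/3)\,m^{k+1}}
=\frac{2\pi}{3\sqrt3}\sum_{m\geq 1}\frac{\chi_3(m)}{m^{k+1}}
=\frac{2\pi}{3\sqrt3}L(k+1;\chi_3).
\]
Thus the right-hand side of \eqref{4-1} becomes $\tfrac{2}{3^{k+2}}\zeta(k+2)+\tfrac{2\pi}{3\sqrt3}L(k+1;\chi_3)$, and dividing through by $2$ gives \eqref{4-2}.

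There is no serious obstacle here; once Theorem~\ref{T-4-1} is in hand the argument is a bookkeeping exercise. The only points needing a word of justification are the legitimacy of the regroupings and of the interchange $m\leftrightarrow n$ --- which follows from absolute convergence of all the double series at $x=1$, valid since every variable tending to infinity carries exponent at least $2$ (indeed $k+2-\nu\geq 2$ for $1\leq\nu\leq k$, and $k+1\geq 2$) --- together with the elementary verification of $\chi_3$ against $\sin(2m\pi/3)$ on the classes $m\equiv 1,2\pmod 3$.
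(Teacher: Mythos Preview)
Your argument is correct and follows exactly the route the paper itself indicates: specialize Theorem~\ref{T-4-1} to $N=3$, $x=1$, use $-2\equiv 1\pmod 3$ to merge the two congruence classes, cancel the extra $1/m(m+n)^{k+1}$ and $1/n(m+n)^{k+1}$ terms by the $m\leftrightarrow n$ symmetry, and rewrite the $\sin$-sum via $\chi_3(m)=(2/\sqrt3)\sin(2m\pi/3)$. The paper states this in two lines without details, so your proposal is simply a fully written-out version of the same derivation.
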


This corollary suggests that 
it is also interesting to generalize Theorem \ref{T-4-1} to the multiple case. 


\bigskip

{\bf Acknowledgements}
The authors would like to express their sincere gratitude to Professor Yasuo Ohno 
for pointing out that \eqref{SF-nu} in Corollary \ref{C-3-1} is a consequence of the work of 
Arakawa and Kaneko \cite{AK99}.

\bigskip

\section{Functional relations and the proof of Theorem \ref{Main-T}} \label{sec-2}

Let 
\begin{equation}
\phi (s)=\sum_{m\geq 1} \frac{(-1)^m}{m^{s}}=(2^{1-s}-1)\zeta(s).
\label{def-phi}
\end{equation}
From \cite[Eq.\,(4.32)]{KMT-CJ}, we see that
\begin{align}
& \lim_{M \to \infty}\sum_{m=1}^{M} \frac{(-1)^{m}\sin(m\theta)}{m}=\phi(0)\theta=-\frac{\theta}{2} \label{e-2-1}
\end{align}
for $\theta \in (-\pi,\pi) \subset \mathbb{R}$. 

We prove the following functional relations for double polylogarithms of Hurwitz type. This result is an extension of our previous result given in \cite[Theorem 2.1]{MT-Siau} which corresponds to the case $b=1$.

\begin{theorem} \label{T-2-1}
For $b \in \mathbb{R}$ with $0<b\leq 1$, $x \in \mathbb{C}$ with $|x| \leq 1$ and $s \in \mathbb{C}$ with $\Re s\geq 1$, 
\begin{align}
& \cos(b\pi)\bigg\{\sum_{m \geq 1 \atop n\geq 0}\frac{x^n}{m(n+b)^s(m+n+b)}-\sum_{m \geq 1 \atop n\geq 0}\frac{x^{m+n}}{m(n+b)(m+n+b)^s} \label{2-2}\\
& \qquad \qquad+\sum_{m >b \atop n\geq 0}\frac{x^n}{(m-b)(n+b)^s(m+n)}\bigg\}\notag\\
& -\frac{\sin(b \pi)}{\pi}\bigg\{ \sum_{m \geq 1 \atop n\geq 0}\frac{x^n}{m(n+b)^s(m+n+b)^2}-\sum_{m \geq 1 \atop n\geq 0}\frac{x^{m+n}}{m(n+b)^2(m+n+b)^s}\notag\\
& \qquad \qquad+\sum_{m >b \atop n\geq 0}\frac{x^n}{(m-b)^2(n+b)^s(m+n)}\bigg\}\notag\\
& =\pi \sin(b\pi)\sum_{n\geq 0}\frac{x^n}{(n+b)^{s+1}}-2\cos(b\pi)\sum_{n\geq 0}\frac{x^n}{(n+b)^{s+2}} +\frac{\sin(b\pi)}{\pi}\sum_{n\geq 0}\frac{x^n}{(n+b)^{s+3}}.\notag
\end{align}
\end{theorem}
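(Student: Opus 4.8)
The plan is to reduce the identity to an application of the classical Fourier expansion \eqref{e-2-1} combined with a double-series manipulation, following the strategy of \cite[Theorem 2.1]{MT-Siau} but tracking the shifting parameter $b$ throughout. The starting point is to consider, for $\theta\in(-\pi,\pi)$, the partial sums $\sum_{m=1}^{M}(-1)^m\sin(m\theta)/m$ and to multiply the limit relation \eqref{e-2-1} by a suitable polylogarithmic kernel in a second variable. More precisely, I would introduce the function
\begin{equation}
F(\theta;x)=\sum_{n\geq 0}\frac{x^n e^{i(n+b)\theta}}{(n+b)^{s}}
\end{equation}
(a Lerch-type series, absolutely convergent for $\Re s\geq 1$, $|x|\leq 1$, with the usual care at $|x|=1$), and form the product of $F(\theta;x)$ with the Fourier series $\sum_{m\geq 1}(-1)^m\sin(m\theta)/m=-\theta/2$. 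Substituting a shift $\theta\mapsto\theta+\pi$ (respectively absorbing the $(-1)^m$ into $e^{im\pi}$) converts the alternating sine series into $\sum_{m\geq1}\sin(m\theta)/m$-type data, and one is left with integrating a product of two Fourier series against $e^{-i(n+b)\theta}$ or similar exponentials over $(-\pi,\pi)$.

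The key steps, in order, are: (i) write both factors as Fourier series and multiply, using the orthogonality relations $\frac1{2\pi}\int_{-\pi}^{\pi}e^{i(p-q)\theta}\,d\theta=\delta_{pq}$ to extract the coefficients — this produces double sums of the shape $\sum_{m,n}x^n/(m(n+b)^s(m+n+b))$ and its companions with $m-b$ in the denominator (coming from the shifted index $m\mapsto m-b$ when the exponential $e^{-ib\theta}$ is present); (ii) handle the non-smoothness of the sawtooth function $-\theta/2$ at the endpoints, which is exactly where the boundary terms $\pi\sin(b\pi)\sum x^n/(n+b)^{s+1}$ and the lower-order corrections on the right-hand side of \eqref{2-2} are generated — these arise from Abel summation / integration by parts picking up the jump of size $\pi$ at $\theta=\pm\pi$; (iii) separate real and imaginary parts, so that the $\cos(b\pi)$ and $\sin(b\pi)$ prefactors emerge naturally from $e^{\pm ib\pi}$; and (iv) collect terms, matching the $1/(m+n+b)$, $1/(m+n+b)^2$ on the left with the $(n+b)^{-s-1}$, $(n+b)^{-s-2}$, $(n+b)^{-s-3}$ terms on the right after the partial-fraction identity $\frac1{(n+b)^s(m+n+b)}=\frac1{m}\big(\frac1{(n+b)^s}-\cdots\big)$ is used in reverse to consolidate.

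The main obstacle I anticipate is rigorously justifying the interchange of the (conditionally convergent, at $|x|=1$ and on the boundary $\Re s=1$) Fourier series with the integration, and in particular controlling the behaviour near $\theta=\pm\pi$ where the sawtooth is discontinuous: one needs a uniform (Abel-type) bound on the partial sums $\sum_{m=1}^{M}(-1)^m\sin(m\theta)/m$ so that dominated convergence applies after multiplying by the bounded kernel $F(\theta;x)$, and the precise value $\phi(0)=-1/2$ must be fed in at the right moment to get the constant in \eqref{e-2-1} correct. A secondary technical point is bookkeeping the two ranges $m\geq1$ versus $m>b$ (which coincide when $0<b\leq 1$ except that the term $m=1$ contributes $1/(1-b)$, matching the $1/(m-b)$ sums on the left), and verifying that the lower-order terms produced in step (ii) assemble exactly into the three explicit series on the right-hand side, with the correct signs $+\pi\sin(b\pi)$, $-2\cos(b\pi)$, $+\sin(b\pi)/\pi$. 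Once $\Re s\geq 1$ is handled, the stated range follows directly; no analytic continuation is needed for Theorem \ref{T-2-1} itself.
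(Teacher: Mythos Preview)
Your proposal has the right skeleton --- start from the sawtooth identity \eqref{e-2-1}, multiply by the Lerch-type kernel $\sum_{n\geq0}(-1)^n x^n e^{i(n+b)\theta}/(n+b)^s$ (the paper keeps the $(-1)^n$), and integrate --- but the key mechanism in step (i) is misidentified. You propose using orthogonality $\frac1{2\pi}\int_{-\pi}^\pi e^{i(p-q)\theta}\,d\theta=\delta_{pq}$ to ``extract coefficients''; this does not apply, because the frequencies $m+n+b$, $-m+n+b$, $n+b$ are non-integer for $0<b<1$, and the naive substitute $\int_{-\pi}^\pi e^{ik\theta}\,d\theta=2\sin(k\pi)/k$ yields only $\sin(b\pi)$ prefactors and only first powers $1/(m+n+b)$, not the paired $\cos(b\pi)\{\cdots\}-\frac{\sin(b\pi)}{\pi}\{\cdots\}$ structure of \eqref{2-2}. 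The paper's actual move is to integrate the identity $J(\theta)=C_0$ against the \emph{weight} $\theta$: one computes $\frac{i}{2\pi}\int_{-\pi}^\pi \theta\, J(\theta)\,d\theta=0$. The point is that
\[
\frac{i}{2\pi}\int_{-\pi}^\pi \theta\, e^{ik\theta}\,d\theta \;=\; \frac{\cos(k\pi)}{k}-\frac{\sin(k\pi)}{\pi k^2}\qquad(k\notin\mathbb{Z}),
\]
so each exponential $e^{i(m+n+b)\theta}$ contributes simultaneously a $\cos(b\pi)/(m+n+b)$ term and a $\sin(b\pi)/(\pi(m+n+b)^2)$ term, which is exactly the left-hand shape of \eqref{2-2}. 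Likewise the three right-hand sums come not from a sawtooth jump as in your step (ii) but from the explicit $i\theta\sum_n (-1)^n x^n e^{i(n+b)\theta}/(n+b)^s$ piece of $J(\theta)$: integrating it against $\theta$ amounts to evaluating $\int_{-\pi}^\pi \theta^2 e^{i(n+b)\theta}\,d\theta$, which after two integrations by parts produces the three descending powers $(n+b)^{-1},(n+b)^{-2},(n+b)^{-3}$ with the coefficients $\pi\sin(b\pi)$, $-2\cos(b\pi)$, $\sin(b\pi)/\pi$.

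Two smaller corrections. First, no partial-fraction identity is used in the proof of Theorem \ref{T-2-1}; that manipulation belongs to the passage from Theorem \ref{T-2-1} to Theorem \ref{Main-T}, so your step (iv) is misplaced. Second, the appearance of the $1/(m-b)$ sums and the $x^{m+n}$ sums is not a matter of comparing the index ranges $m\geq 1$ versus $m>b$: it comes from splitting the $e^{i(-m+n+b)\theta}$ block of $J(\theta)$ according to the sign of $-m+n+b$, and then substituting $l=m-n$ (when $m>n+b$) and $j=n-m$ (when $m<n+b$) after the weighted integration. With the $\theta$-weighted integral in place, the rest goes as you outline: justify the interchange of summation by uniform convergence on compact subsets of $(-\pi,\pi)$ (as in \cite{MT-Quart,EMT}), establish \eqref{2-2} first for $\Re s>1$, and then pass to $\Re s\geq 1$ by absolute convergence of both sides.
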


\begin{proof}
First we assume $\Re s>1$. Then, by (\ref{e-2-1}), 
\begin{align}
& 2i\left(\lim_{M \to \infty}\sum_{m=1}^{M} \frac{(-1)^{m}\sin(m\theta)}{m}+\frac{\theta}{2}\right) \sum_{n\geq 0} \frac{(-1)^n x^n e^{i(n+b)\theta}}{(n+b)^s}=0 \label{e-2-6} 
\end{align}
holds for $\theta \in (-\pi,\pi)$. 
It follows from \cite[$\S$ 3.35]{WW} that the former term in the parentheses on the left-hand side of (\ref{e-2-6}) is uniformly convergent on any compact subset in $(-\pi,\pi)$ though it is not absolutely convergent. Further the latter part on the left-hand side is absolutely and uniformly convergent for $\theta\in (-\pi,\pi)$ because $\Re s>1$. 
Changing the order of summation, we see that 
\begin{align}
& \lim_{M \to \infty}\sum_{m= 1}^{M}\sum_{n\geq 0} \frac{(-1)^{m+n}x^n e^{i(m+n+b)\theta}}{m(n+b)^s} -\lim_{M \to \infty}\sum_{m= 1}^{M}\sum_{n\geq 0 \atop m\not=n+b} \frac{(-1)^{m+n}x^n e^{i(-m+n+b)\theta}}{m(n+b)^s} \label{FE-poly} \\
& \notag \qquad +{i\theta}\sum_{n\geq 0} \frac{(-1)^{n}x^n e^{i(n+b)\theta}}{(n+b)^s}
=
\begin{cases}
-\sum_{n\geq 0}\frac{x^n}{(n+1)^{s+1}} & (b=1)\\
0 & (b\not=1), 
\end{cases}
\end{align}
where the left-hand side is uniformly convergent on any compact subset in 
$(-\pi,\pi)$ though it is not absolutely convergent.   Note that the change of 
the order of summation can be justified (see \cite[$\S$\
\,4]{MT-Quart} or \cite[Section 7]{EMT}).

Denote the left-hand side of \eqref{FE-poly} by $J(\theta)$ and the right-hand side by $C_0$, respectively. Multiply by $(i/2\pi)\theta$ the both sides and consider the integration 
$$\frac{i}{2\pi}\int_{-\pi}^{\pi}\theta J(\theta)d\theta= \frac{iC_0}{2\pi}\int_{-\pi}^{\pi} \, \theta\, d\theta=0.$$
This gives that
\begin{align*}
& \cos(b \pi) \sum_{m \geq 1 \atop n\geq 0}\frac{x^n}{m(n+b)^s(m+n+b)}
-\frac{\sin(b \pi)}{\pi}\sum_{m \geq 1 \atop n\geq 0}\frac{x^n}{m(n+b)^s(m+n+b)^2}\\
& -\cos(b \pi) \sum_{m \geq 1 \atop {n\geq 0 \atop m\not=n+b}}\frac{x^n}{m(n+b)^s(m+n+b)}+\frac{\sin(b \pi)}{\pi}\sum_{m \geq 1 \atop {n\geq 0 \atop m\not=n+b}}\frac{x^n}{m(n+b)^s(m+n+b)^2}\\
& -\pi \sin(b\pi)\sum_{n\geq 0}\frac{x^n}{(n+b)^{s+1}}-2\cos(b\pi)\sum_{n\geq 0}\frac{x^n}{(n+b)^{s+2}} +\frac{\sin(b\pi)}{\pi}\sum_{n\geq 0}\frac{x^n}{(n+b)^{s+3}}=0.
\end{align*}
Note that the double sums on the left-hand side are determined independently from the order of summation because these are absolutely convergent. 
We can rewrite the third and the fourth members on the left-hand side to 
\begin{align*}
& -\cos(b \pi)\left\{ - \sum_{l>b \atop n\geq 0}\frac{x^n}{(n+l)(n+b)^s(l-b)}
+ \sum_{m\geq 1 \atop j\geq 0}\frac{x^{m+j}}{m(m+j+b)^s(j+b)}\right\}\\
& +\frac{\sin(b \pi)}{\pi}\left\{ \sum_{l>b \atop n\geq 0}\frac{x^n}{(n+l)(n+b)^s(l-b)^2}
+ \sum_{m\geq 1 \atop j\geq 0}\frac{x^{m+j}}{m(m+j+b)^s(j+b)^2}\right\},
\end{align*}
by putting $l=m-n\,(>b)$ and $j=-m+n\,(\geq 0)$ corresponding to $m>n+b$ and $m<n+b$, respectively. Hence we see that \eqref{2-2} holds for $\Re s>1$, further holds for $\Re s\geq 1$, because the both sides of \eqref{2-2} are absolutely convergent for $\Re s\geq 1$. Thus we have the assertion.
\end{proof}

Now we give the proof of Theorem \ref{Main-T} as follows.

\begin{proof}[Proof of Theorem \ref{Main-T}]

Letting $s=k \in \mathbb{N}$ in \eqref{2-2} and using the partial fraction decomposition 
$$\frac{1}{XY}=\left(\frac{1}{X}+\frac{1}{Y}\right)\frac{1}{X+Y} $$
repeatedly, we can rewrite the former three terms in the parentheses on the left-hand side of (\ref{2-2}) to 
\begin{align}
& \sum_{\nu=2}^{k+1}\sum_{m \geq 1 \atop n\geq 0}\frac{x^n}{(n+b)^{k+2-\nu}(m+n+b)^\nu}+\sum_{m \geq 1 \atop n\geq 0}\frac{x^n}{m(m+n+b)^{k+1}}\label{2-3}\\
& -\sum_{m \geq 1 \atop n\geq 0}\frac{x^n}{(n+b)(m+n+b)^{k+1}}-\sum_{m \geq 1 \atop n\geq 0}\frac{x^n}{m(m+n+b)^{k+1}} \notag\\
& +\sum_{\nu=2}^{k+1}\sum_{m >b \atop n\geq 0}\frac{x^n}{(n+b)^{k+2-\nu}(m+n)^\nu}+\sum_{m >b \atop n\geq 0}\frac{x^n}{(m-b)(m+n)^{k+1}}.\notag
\end{align}
Also we can similarly decompose the latter four terms in the parentheses on the left-hand side of (\ref{2-2}). In particular, the last term can be decomposed as
\begin{align}
& \sum_{m >b \atop n\geq 0}\frac{x^n}{(m-b)^2(n+b)^k(m+n)} \label{2-4}\\
& =\sum_{m >b \atop n\geq 0}\frac{x^n}{(m-b)^2(n+b)^{k-1}(m+n)^2}+\sum_{m >b \atop n\geq 0}\frac{x^n}{(m-b)(n+b)^k(m+n)^2}.\notag
\end{align}
In order to repeat this decomposition, we put 
$$S_{\mu}=\sum_{m >b \atop n\geq 0}\frac{x^n}{(m-b)^2(n+b)^{k+1-\mu}(m+n)^{\mu}}\qquad (1\leq \mu\leq k+1).$$
Then by the same consideration as above, we have
\begin{align*}
S_{\mu}-S_{\mu+1}& =\sum_{m >b \atop n\geq 0}\frac{x^n}{(m-b)(n+b)^{k+1-\mu}(m+n)^{\mu+1}}\\
&=\sum_{\nu=\mu+2}^{k+2}\sum_{m >b \atop n\geq 0}\frac{x^n}{(n+b)^{k+3-\nu}(m+n)^{\nu}}+\sum_{m >b \atop n\geq 0}\frac{x^n}{(m-b)(m+n)^{k+2}} 
\end{align*}
for $1\leq \mu\leq k$. Summing up these relations for $\mu=1,2,\ldots,k$, we obtain 
\begin{align*}
S_1=& S_{k+1}+\sum_{\mu=1}^{k}\sum_{\nu=\mu+2}^{k+2}\sum_{m >b \atop n\geq 0}\frac{x^n}{(n+b)^{k+3-\nu}(m+n)^{\nu}}+k\sum_{m >b \atop n\geq 0}\frac{x^n}{(m-b)(m+n)^{k+2}}\\
=& \sum_{m >b \atop n\geq 0}\frac{x^n}{(m-b)^2(m+n)^{k+1}}+\sum_{\nu=3}^{k+2}(\nu-2)\sum_{m >b \atop n\geq 0}\frac{x^n}{(n+b)^{k+3-\nu}(m+n)^{\nu}}\\
& \qquad +k\sum_{m >b \atop n\geq 0}\frac{x^n}{(m-b)(m+n)^{k+2}},
\end{align*}
which completes the decomposition \eqref{2-4}. Combining \eqref{2-3} and these considerations, we obtain \eqref{SF-Hurwitz}.
\end{proof}

%

\section{The proof of Theorem \ref{C-1-4}} \label{sec-2-2}

First we fix $k\in \mathbb{N}$, $x \in \mathbb{C}$ with $|x|\geq 1$, assume $0<b<1$, and differentiate the both sides of \eqref{SF-Hurwitz} with respect to the shifting parameter $b$. It should be noted that on the left-hand side of \eqref{SF-Hurwitz}, the second term in the former parentheses and the third and the fourth terms in the latter parentheses are not continuous at $b=1$, because
\begin{align*}
& \sum_{m> b \atop n\geq 0}\frac{x^n}{(m-b)(m+n)^{k+2}} = 
\frac{1}{x(1-b)}{\rm Li}(k+2;x)+\sum_{m \geq 2 \atop n\geq 0}\frac{x^n}{(m-b)(m+n)^{k+2}},\\
& \sum_{m> b \atop n\geq 0}\frac{x^n}{(m-b)^2(m+n)^{k+1}}=\frac{1}{x(1-b)^2}{\rm Li}(k+1;x)+\sum_{m \geq 2 \atop n\geq 0}\frac{x^n}{(m-b)^2(m+n)^{k+1}},\\
& \sum_{m> b \atop n\geq 0}\frac{x^n}{(m-b)(m+n)^{k+2}}=\frac{1}{x(1-b)}{\rm Li}(k+2;x)+\sum_{m\geq 2 \atop n\geq 0}\frac{x^n}{(m-b)(m+n)^{k+2}}.
\end{align*}
Hence we pick up these non-continuous terms from the left-hand side of \eqref{SF-Hurwitz}. Namely we define $g(b)=g_k(b;x)$ by 
\begin{equation}
g(b)=\frac{\cos(b\pi)}{x(1-b)}{\rm Li}(k+1;x)+\frac{\sin(b\pi)}{\pi x(1-b)^2}{\rm Li}(k+1;x)+\frac{k\sin(b\pi)}{\pi x(1-b)}{\rm Li}(k+2;x). \label{def-g}
\end{equation}

\begin{lemma} \label{L-2-1} Let $k\in \mathbb{N}$ and $x\in \mathbb{C}$ with $|x|\leq 1$. Then $g(b)=g_k(b;x)$ is holomorphic for $b\in D_\varepsilon(1)$ (defined in \eqref{def-D-1}) and satisfies
\begin{equation}
g(1)=\frac{k}{x}{\rm Li}(k+2;x),\ g'(1)=-\frac{\pi^2}{3x}{\rm Li}(k+1;x),\ g''(1)=-\frac{k\pi^2}{3x}{\rm Li}(k+2;x). \label{g-deri}
\end{equation}
Furthermore \eqref{SF-Hurwitz} holds as a functional relation among holomorphic functions for $b\in D_\varepsilon(1)$. 
\end{lemma}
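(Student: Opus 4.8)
The plan is to treat the two assertions of the lemma in turn: first that $g$ is holomorphic on $D_\varepsilon(1)$ with the stated values of $g,g',g''$ at $b=1$, and then that \eqref{SF-Hurwitz} itself continues to an identity of holomorphic functions on $D_\varepsilon(1)$.

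For the first assertion I would substitute $b=1+t$ and use $\cos(b\pi)=-\cos(\pi t)$, $\sin(b\pi)=-\sin(\pi t)$, $1-b=-t$, which rewrites \eqref{def-g} as
$$g(1+t)=\frac{{\rm Li}(k+1;x)}{x}\cdot\frac{\pi t\cos(\pi t)-\sin(\pi t)}{\pi t^{2}}+\frac{k\,{\rm Li}(k+2;x)}{x}\cdot\frac{\sin(\pi t)}{\pi t}.$$
The apparent double pole at $t=0$ then reduces to the elementary fact that $\pi t\cos(\pi t)-\sin(\pi t)$ vanishes to order three at $t=0$: from the Taylor expansions $\sin(\pi t)=\pi t-\tfrac{(\pi t)^{3}}{6}+O(t^{5})$ and $\cos(\pi t)=1-\tfrac{(\pi t)^{2}}{2}+O(t^{4})$ one obtains $\pi t\cos(\pi t)-\sin(\pi t)=-\tfrac{\pi^{3}}{3}t^{3}+O(t^{5})$, hence $\frac{\pi t\cos(\pi t)-\sin(\pi t)}{\pi t^{2}}=-\tfrac{\pi^{2}}{3}t+O(t^{3})$, while $\frac{\sin(\pi t)}{\pi t}=1-\tfrac{\pi^{2}}{6}t^{2}+O(t^{4})$. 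Since $\cos(\pi t),\sin(\pi t)$ are entire and the only possible singularity of $g$ in a small disc about $b=1$ is at $b=1$ itself, this already shows that $g$ extends holomorphically to $D_\varepsilon(1)$ for $\varepsilon$ small; reading off the constant, linear and quadratic coefficients of the two resulting power series in $t$ then gives $g(1)=\tfrac{k}{x}{\rm Li}(k+2;x)$, $g'(1)=-\tfrac{\pi^{2}}{3x}{\rm Li}(k+1;x)$ and $g''(1)=-\tfrac{k\pi^{2}}{3x}{\rm Li}(k+2;x)$, which is \eqref{g-deri}.

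For the second assertion I would write the left-hand side of \eqref{SF-Hurwitz} as $h(b)+g(b)$, where $h(b)$ results from peeling off, from each of the sums over $m>b$ having a factor $m-b$ in the denominator, the $m=1$ term — which, as in the displays preceding the lemma, is the piece assembled into $g(b)$ — and keeping the tails over $m\ge 2$. Each series entering $h(b)$, together with the three series on the right-hand side of \eqref{SF-Hurwitz}, has denominators built from $n+b$, $m+n+b$ (with $n\ge 0$) or $m-b$, $m+n$ (with $m\ge 2$, $n\ge 0$), all of which remain bounded away from $0$ uniformly for $b\in D_\varepsilon(1)$ once $\varepsilon$ is small; the standard Weierstrass argument (local uniform convergence of a series of holomorphic functions, in the spirit of the absolute-convergence discussion in the proof of Theorem \ref{T-2-1}) then makes $h(b)$ and the right-hand side of \eqref{SF-Hurwitz} holomorphic on $D_\varepsilon(1)$. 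By the first assertion $g(b)$ is holomorphic there as well, so both sides of \eqref{SF-Hurwitz} are holomorphic on $D_\varepsilon(1)$; since they agree on the segment $(1-\varepsilon,1]$ by Theorem \ref{Main-T}, the identity theorem forces equality throughout $D_\varepsilon(1)$.

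I expect the one genuinely delicate point to be the cancellation of the order-two pole of $g$ at $b=1$, i.e. that $\frac{\cos(b\pi)}{1-b}+\frac{\sin(b\pi)}{\pi(1-b)^{2}}$ is holomorphic at $b=1$; once the Taylor expansions above are in hand this is immediate, but the whole lemma rests on this precise interplay of the $\cos$ and $\sin$ factors with the $(1-b)$ and $(1-b)^{2}$ denominators. A secondary and much milder point is giving the conditional sums $\sum_{m>b}$ a meaning for complex $b$; the decomposition into $m\ge 2$ tails plus the explicit function $g$ sidesteps this, and $g$ is then recognized as the natural holomorphic completion of those terms across $b=1$.
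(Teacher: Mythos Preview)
Your proposal is correct and follows essentially the same route as the paper: Taylor-expand $g$ about $b=1$ to see the pole cancellation and read off $g(1),g'(1),g''(1)$, then observe that the remaining series (your $h$, the paper's $H_1,H_2$) are absolutely convergent on $D_\varepsilon(1)$, hence holomorphic. The only addition is that you make explicit the appeal to the identity theorem to carry the equality from the real segment $(1-\varepsilon,1]$ to all of $D_\varepsilon(1)$, which the paper leaves tacit.
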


\begin{proof}
First study the part $g(b)$ of the left-hand side of \eqref{SF-Hurwitz}. By considering the Taylor expansion of each term in \eqref{def-g} around $b=1$, we can easily see that $g(b)$ is holomorphic for $b\in D_\varepsilon(1)$ and satisfies \eqref{g-deri}. The remainder parts on the left-hand side of \eqref{SF-Hurwitz} are absolutely convergent for $b\in D_\varepsilon(1)$. On the other hand, we can see that the right-hand side of \eqref{SF-Hurwitz} is absolutely convergent for $b\in D_\varepsilon(1)$. Thus we have the assertion.
\end{proof}

It follows from this lemma that the left-hand side of \eqref{SF-Hurwitz} can be written as 
\begin{equation}
\cos(b\pi)H_1(b)-\frac{\sin(b\pi)}{\pi}H_2(b)+g(b), \label{left-hand}
\end{equation}
by picking $g(b)$ up from the terms in each parenthesis on the left-hand side of \eqref{SF-Hurwitz} and denoting each remainder part by $H_1(b)$ and $H_2(b)$, respectively. Then $H_1(b)$ and $H_2(b)$ are holomorphic for $b \in D_\varepsilon(1)$ and can be differentiated term-wisely because of the absolute convergency. Hence we differentiate \eqref{left-hand} in $b$, multiply by $x$, replace $n$ by $n+1$ and let $b \to 1$. Then, by \eqref{g-deri}, we have
\begin{align}
x & \lim_{b \to 1}\left\{-H'_1(b)+H_2(b)+g'(b)\right\}  \label{equ-1}\\
 =&(k+1) \sum_{m,n \geq 1}\frac{x^n}{m(m+n)^{k+2}}-(k+1) \sum_{m,n \geq 1}\frac{x^{m+n}}{m(m+n)^{k+2}}  \notag\\
& +\sum_{\nu=2}^{k+1}\left\{(k+2-\nu)\sum_{m,n \geq 1}\frac{x^n}{n^{k+3-\nu}(m+n)^\nu}+\nu\sum_{m,n \geq 1}\frac{x^n}{n^{k+2-\nu}(m+n)^{\nu+1}}\right\} \notag\\
& - \sum_{m,n \geq 1}\frac{x^{m+n}}{n^2(m+n)^{k+1}}-(k+1) \sum_{m,n \geq 1}\frac{x^{m+n}}{n(m+n)^{k+2}}- \sum_{m,n \geq 1}\frac{x^{n}}{m^2(m+n)^{k+1}}\notag\\
& +\sum_{\nu=2}^{k+1}(k+2-\nu)\sum_{m,n \geq 1}\frac{x^n}{n^{k+3-\nu}(m+n-1)^\nu}\notag\\
& + \sum_{m,n \geq 1}\frac{x^{n}}{m(m+n)^{k+2}}-\sum_{m,n \geq 1}\frac{x^{m+n}}{m(m+n)^{k+2}}+\sum_{\nu=1}^{k}\sum_{m,n \geq 1}\frac{x^n}{n^{\nu}(m+n)^{k+3-\nu}}\notag\\
& - \sum_{m,n \geq 1}\frac{x^{m+n}}{n(m+n)^{k+2}}- \sum_{m,n \geq 1}\frac{x^{m+n}}{n^2(m+n)^{k+1}}\notag\\
& - \sum_{m,n \geq 1}\frac{x^{n}}{m^2(m+n)^{k+1}}- k\sum_{m,n \geq 1}\frac{x^{n}}{m(m+n)^{k+2}}\notag\\
& -\sum_{\nu=3}^{k+2}(\nu-2)\sum_{m,n \geq 1}\frac{x^n}{n^{k+3-\nu}(m+n-1)^{\nu}}\notag\\
& -\frac{\pi^2}{3}\sum_{n\geq 1}\frac{x^n}{n^{k+1}}.\notag
\end{align}
The fourth and the eighth lines on the right-hand side are 
\begin{align*}
& \sum_{\nu=2}^{k+1}(k+2-\nu)\sum_{m,n \geq 1}\frac{x^n}{n^{k+3-\nu}(m+n)^\nu}+\sum_{\nu=2}^{k+1}(k+2-\nu)\sum_{n \geq 1}\frac{x^n}{n^{k+3}},\\
& -\sum_{\mu=2}^{k+1}(\mu-1)\sum_{m,n \geq 1}\frac{x^n}{n^{k+2-\mu}(m+n)^{\mu+1}}-\sum_{\mu=2}^{k+1}(\mu-1)\sum_{n \geq 1}\frac{x^n}{n^{k+3}},
\end{align*}
respectively, and
their second members are cancelled with each other. 
We apply \eqref{SF-EMT} to the fifth line on the right-hand side of \eqref{equ-1}. Then it can be rewritten to 
$$\sum_{n\geq 1}\frac{x^n}{n^{k+3}}-\sum_{m,n \geq 1}\frac{x^n}{n^{k+1}(m+n)^2}.$$
Hence the right-hand side of \eqref{equ-1} is equal to
\begin{align}
& \sum_{m,n \geq 1}\frac{x^n}{m(m+n)^{k+2}} \label{equ-2}\\
& -(2k+3) \sum_{m,n \geq 1}\frac{x^{m+n}}{m(m+n)^{k+2}} -2 \sum_{m,n \geq 1}\frac{x^{n}}{m^2(m+n)^{k+1}}-2 \sum_{m,n \geq 1}\frac{x^{m+n}}{m^2(m+n)^{k+1}}\notag\\
& +2\sum_{\nu=2}^{k+1}(k+2-\nu)\sum_{m,n \geq 1}\frac{x^n}{n^{k+3-\nu}(m+n)^\nu}\notag\\
& +\sum_{\nu=2}^{k+1}\sum_{m,n \geq 1}\frac{x^n}{n^{k+2-\nu}(m+n)^{\nu+1}}\notag\\
& - \sum_{m,n \geq 1}\frac{x^{n}}{n^{k+1}(m+n)^{2}}+\sum_{n\geq 1}\frac{x^n}{n^{k+3}} -2\zeta(2)\sum_{n\geq 1}\frac{x^n}{n^{k+1}}.\notag
\end{align}
By applying \eqref{SF-EMT} to the first and the fourth lines in \eqref{equ-2}, they can be rewritten to 
$$ \sum_{m,n \geq 1}\frac{x^{m+n}}{m(m+n)^{k+2}} -\sum_{m,n \geq 1}\frac{x^n}{n^{k+1}(m+n)^{2}}+\sum_{n \geq 1}\frac{x^n}{n^{k+3}}.$$
Furthermore we can see that
\begin{align}
\zeta(2)\sum_{l\geq 1}\frac{x^l}{l^{k+1}} & = \bigg\{ \sum_{1\leq m<l}+\sum_{1\leq l<m}+\sum_{1\leq m=l}\bigg\} \frac{x^l}{m^2 l^{k+1}} \label{equ-3}\\
& =\sum_{m,n \geq 1}\frac{x^{m+n}}{m^2(m+n)^{k+1}}+\sum_{m,n \geq 1}\frac{x^{m}}{m^{k+1}(m+n)^{2}}+\sum_{n \geq 1}\frac{x^n}{n^{k+3}}.\notag
\end{align}
Therefore \eqref{equ-2} can be rewritten to 
\begin{align}
& -(2k+2) \sum_{m,n \geq 1}\frac{x^{m+n}}{m(m+n)^{k+2}} -2 \sum_{m,n \geq 1}\frac{x^{n}}{m^2(m+n)^{k+1}} \label{equ-4}\\
& +2\sum_{\nu=2}^{k+1}(k+2-\nu)\sum_{m,n \geq 1}\frac{x^n}{n^{k+3-\nu}(m+n)^\nu}+4\sum_{n\geq 1}\frac{x^n}{n^{k+3}} -4\zeta(2)\sum_{n\geq 1}\frac{x^n}{n^{k+1}},\notag
\end{align}
which is equal to the value $x \lim_{b \to 1}\left\{-H'_1(b)+H_2(b)+g'(b)\right\}$ in \eqref{equ-1}. 
On the other hand, we differentiate the right-hand side of \eqref{SF-Hurwitz} in $b$, multiply by $x$ and let $b \to 1$. Then we have 
\begin{equation}
-\pi^2 \sum_{n\geq 1}\frac{x^n}{n^{k+1}}+2(k+3)\sum_{n\geq 1}\frac{x^n}{n^{k+3}}. \label{equ-5}
\end{equation}
Combining \eqref{equ-4} and \eqref{equ-5}, we obtain the following proposition. 
\begin{proposition} \label{P-3-1}
For $k\in \mathbb{N}$, 
\begin{align}
& \sum_{\nu=2}^{k+1}(k+2-\nu)\sum_{m,n \geq 1}\frac{x^n}{n^{k+3-\nu}(m+n)^\nu} \label{Prop-3-1}\\
& \quad -\sum_{m,n \geq 1}\frac{x^n}{m^2(m+n)^{k+1}}-(k+1)\sum_{m,n \geq 1}\frac{x^{m+n}}{m(m+n)^{k+2}}\notag\\
& \quad =(k+1) \sum_{n\geq 1}\frac{x^n}{n^{k+3}}-\zeta(2)\sum_{n\geq 1}\frac{x^n}{n^{k+1}}\notag.
\end{align}
\end{proposition}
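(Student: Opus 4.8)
The plan is to derive \eqref{Prop-3-1} by equating the two quantities \eqref{equ-4} and \eqref{equ-5} already produced in the text above. By Lemma~\ref{L-2-1}, both sides of \eqref{SF-Hurwitz} are holomorphic functions of $b$ on $D_\varepsilon(1)$ and coincide there, so one may differentiate in $b$ and the resulting functions still coincide. Applying to each side the same sequence of operations --- differentiate in $b$, multiply by $x$, shift the summation index $n\mapsto n+1$, and let $b\to 1$ --- I would invoke the computation already carried out: through the decomposition \eqref{left-hand}, the values \eqref{g-deri}, and two applications of the sum formula \eqref{SF-EMT} together with the elementary splitting \eqref{equ-3}, the left-hand derivative is reduced to \eqref{equ-4}, while the right-hand derivative is simply \eqref{equ-5}. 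Hence the expressions \eqref{equ-4} and \eqref{equ-5} are equal.

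It then remains only to rearrange this single linear identity. I would substitute $\pi^2 = 6\zeta(2)$ into \eqref{equ-5}, collect the two one-fold sums $\sum_{n\geq1}x^n/n^{k+1}$ and $\sum_{n\geq1}x^n/n^{k+3}$ on one side, and divide through by $2$. The various numerical coefficients then collapse to precisely the integer weights $k+2-\nu$, $-1$, $-(k+1)$, $k+1$ and the constant $-\zeta(2)$ appearing in \eqref{Prop-3-1}, which completes the proof of the proposition.

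I do not expect any genuine obstacle at this last stage: it is a finite bookkeeping computation with no convergence issues, all the series being absolutely convergent for $|x|\leq1$. The only points demanding care are keeping the index shift $n\mapsto n+1$ applied consistently on both sides when passing to $b\to1$, and correctly propagating the constant $-\pi^2/3 = -2\zeta(2)$ that enters from $x\,g'(1)$ in \eqref{equ-1} down to the $\zeta(2)$-coefficient on the right of \eqref{Prop-3-1}; these are exactly the places where a stray numerical factor could slip in. The substantive part of the argument --- the legitimacy of term-by-term differentiation (Lemma~\ref{L-2-1}) and the two uses of the double-polylogarithm sum formula \eqref{SF-EMT} --- has already been carried out in the text preceding the statement, so the proof of the proposition reduces to this equating-and-simplifying.
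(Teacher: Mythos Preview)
Your proposal is correct and is exactly the paper's own argument: the paper derives \eqref{equ-4} and \eqref{equ-5} in the text preceding the statement and then says ``Combining \eqref{equ-4} and \eqref{equ-5}, we obtain the following proposition.'' The bookkeeping you outline (replace $\pi^2$ by $6\zeta(2)$, move the single sums to the right, divide by $2$) is precisely the content of that ``combining''.
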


Based on these considerations, we now prove Theorem \ref{C-1-4}. 

\begin{proof}[Proof of Theorem \ref{C-1-4}] 
By applying \eqref{SF-EMT}, the first member on the left-hand side of \eqref{Prop-3-1} can be rewritten to 
\begin{align*}
& (k+1)\sum_{\nu=2}^{k+1}\sum_{m,n \geq 1}\frac{x^n}{n^{k+3-\nu}(m+n)^\nu}-\sum_{\mu=1}^{k}\mu \sum_{m,n \geq 1}\frac{x^n}{m^2(m+n)^{\mu+1}}\\
& \ =(k+1) \bigg\{ \sum_{n\geq 1}\frac{x^n}{n^{k+3}}-\sum_{m,n \geq 1}\frac{x^n}{m(m+n)^{k+2}}\notag \\
& \quad +\sum_{m,n \geq 1}\frac{x^{m+n}}{m(m+n)^{k+2}}-\sum_{m,n \geq 1}\frac{x^n}{n(m+n)^{k+2}}\bigg\}\notag \\
& \quad -\sum_{\mu=2}^{k+1}\,\mu \sum_{m,n \geq 1}\frac{x^n}{n^{k+2-\mu}(m+n)^{\mu+1}}-\sum_{m,n \geq 1}\frac{x^n}{m^{k+1}(m+n)^{2}}\notag\\
& \quad +(k+1)\sum_{m,n \geq 1}\frac{x^{n}}{n(m+n)^{k+2}}.\notag
\end{align*}
Hence \eqref{Prop-3-1} can be rewritten to 
\begin{align*}
& (k+1)\sum_{n\geq 1}\frac{x^n}{n^{k+3}} -(k+1)\sum_{m,n \geq 1}\frac{x^n}{m(m+n)^{k+2}} \\
& -\sum_{\mu=2}^{k+1}\,\mu \sum_{m,n \geq 1}\frac{x^n}{n^{k+2-\mu}(m+n)^{\mu+1}} -\sum_{m,n \geq 1}\frac{x^n}{n^{k+1}(m+n)^{2}}-\sum_{m,n \geq 1}\frac{x^n}{m^{2}(m+n)^{k+1}}\notag\\
& \ =(k+1) \sum_{n\geq 1}\frac{x^n}{n^{k+3}}-\zeta(2)\sum_{n \geq 1}\frac{x^{n}}{n^{k+1}}.\notag
\end{align*}
Therefore, by \eqref{equ-3}, we complete the proof of Theorem \ref{C-1-4}.
\end{proof}

\begin{remark} \label{rem-AK}
As mentioned in Section \ref{sec-1}, \eqref{SF-nu} 
can be derived from the case $(r,k)=(1,2)$ of a formula of Arakawa and Kaneko 
\cite[Corollary 11]{AK99}.
Their formula is of a more general feature and was proved by considering analytic 
properties of 
\begin{equation}
\xi(k_1,\ldots,k_r;s)=\frac{1}{\Gamma(s)}\int_{0}^\infty \frac{t^{s-1}}{e^t-1}\cdot {{\rm Li}}_{r}({k_1,\ldots,k_r};1-e^{-t}) dt, \label{AK-xi}
\end{equation}
where
$$\textrm{Li}_{r}(k_1,\ldots,k_r;z)=\sum_{0<m_1<\cdots<m_r}\frac{z^{m_r}}{m_1^{k_1}\cdots m_r^{k_r}}\qquad (k_1,\ldots,k_r\in \mathbb{N}).$$
In addition, it is to be noted that the duality for MZVs plays an important role in 
the proof of their formula. 
Therefore, at present, it is unclear whether the method of Arakawa and Kaneko can be 
used to prove \eqref{AK-poly} and \eqref{AK-L-1}. 
This is one of the reasons why we propose, in Section \ref{sec-1}, the problem of
generalizing Theorem \ref{Main-T} to the multiple case; because this would lead,
without using the duality, 
to multiple generalizations of \eqref{AK-poly} and \eqref{AK-L-1}, that would imply 
an $L$-analogue of the Arakawa-Kaneko formula.
\end{remark}

\begin{remark}\label{Rem-3-4}
If we repeatedly differentiate the both sides of \eqref{SF-Hurwitz} with respect to $b$ and consider its limit as $b\to 1$, then we can obtain other formulas analogous to \eqref{sumformula}. For example, considering the derivative of the second order, we can obtain the following formula, while we omit its proof for the purpose of saving space.
\begin{align}
& \sum_{\nu=3}^{k}\frac{\nu(\nu-1)}{2}\sum_{m\geq 1\atop n\geq 0}\frac{x^n}{(n+1)^{k+3-\nu}(m+n+1)^{\nu+1}} \label{higher}\\
& \ +\frac{(k+2)(k+3)}{6}\left\{ \sum_{m\geq 1\atop n\geq 0}\frac{x^{m+n}-x^n}{m(m+n+1)^{k+3}}-\sum_{m\geq 1\atop n\geq 0}\frac{x^{m+n}-x^n}{(n+1)(m+n+1)^{k+3}}\right\} \notag\\
& \ -(k+1)\left\{ \sum_{m\geq 1\atop n\geq 0}\frac{x^{m+n}-x^n}{m^2(m+n+1)^{k+2}}-\sum_{m\geq 1\atop n\geq 0}\frac{x^{m+n}-x^n}{(n+1)^2(m+n+1)^{k+2}}\right\} \notag\\
& \ -\sum_{m\geq 1\atop n\geq 0}\frac{x^{m+n}-x^n}{m^3(m+n+1)^{k+1}}\notag\\
& \ +\frac{k(k+2)}{3}\sum_{m\geq 1\atop n\geq 0}\frac{x^n}{(n+1)(m+n+1)^{k+3}}+\frac{(k+2)(2k+3)}{3}\sum_{m\geq 1\atop n\geq 0}\frac{x^n}{m(m+n+1)^{k+3}}\notag\\
& \ +\frac{(k+1)(k+2)}{2}\sum_{m\geq 1\atop n\geq 0}\frac{x^n}{(n+1)^2(m+n+1)^{k+2}}=\sum_{n\geq 0}\frac{x^n}{(n+1)^{k+4}}\qquad (k\geq 3).\notag
\end{align}
Also we can obtain an $L$-analogue similarly to Corollaries \ref{C-1-3} and 
\ref{C-3-1}. Note that, letting $x=1$ in \eqref{higher}, we obtain 
\begin{align}
& \sum_{\nu=3}^{k}\frac{\nu(\nu-1)}{2}\zeta_2(k+3-\nu,\nu+1) +\frac{(k+1)(k+2)}{2}\zeta_2(2,k+2)\label{AK-F2}\\
& \qquad  +(k+1)(k+2)\zeta_2(1,k+3)=\zeta(k+4)\qquad (k\geq 3), \notag 
\end{align}
which can be also derived from \cite[Corollary 11]{AK99} in the case $(r,k)=(2,2)$. 
\end{remark}

\bigskip

%
\section{Proof of Theorem \ref{T-4-1} and the shifting analogue}\label{sec-3}
%

In this section, we prove Theorem \ref{T-4-1}.
For this purpose, we first prove the following functional relations.

%
%

\begin{proposition}\label{P-4-3}
Let $N$ be any odd positive integer. Then
\begin{align}
& \sum_{m,n\geq 1 \atop {m \equiv n\,(\text{mod\ }N)}}\frac{x^n}{mn^s(m+n)}+\sum_{m,n\geq 1 \atop {m \equiv -2n\,(\text{mod\ }N)}}\frac{x^n}{mn^s(m+n)} -\sum_{m,n\geq 1 \atop {m \equiv -2n\,(\text{mod\ }N)}}\frac{x^{m+n}}{mn(m+n)^s}\label{4-3}\\
& \qquad =\frac{2}{N^{s+2}}{\rm Li}(s+2;x^N)+\frac{\pi}{N}\sum_{m=1\atop N\nmid m}^\infty \frac{x^m}{\sin(2m\pi/N)m^{s+1}}\notag
\end{align}
holds for $s\in \mathbb{C}$ with $\Re s\geq 1$.
\end{proposition}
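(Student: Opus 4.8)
The plan is to mimic the proof of Theorem~\ref{T-2-1}: start from the elementary Fourier-type identity \eqref{e-2-1} for the alternating sum $\sum (-1)^m\sin(m\theta)/m$, but now exploit the $N$-periodicity hidden in the congruence conditions by using a suitable trigonometric kernel adapted to residues modulo $N$. Concretely, since $N$ is odd, the map $n\mapsto 2n$ is a bijection on $\mathbb{Z}/N\mathbb{Z}$, and one can detect the conditions $m\equiv n$ and $m\equiv -2n\pmod N$ by averaging characters (or roots of unity) $e^{2\pi i a(\cdot)/N}$ over $a=0,1,\dots,N-1$. First I would write down, for $\theta\in(0,2\pi/N)$ or a similar fundamental interval, the identity obtained by multiplying $\sum_{m\geq 1}\sin(Nm\theta)/(Nm)$ (or an alternating variant) against $\sum_{n\geq 1} x^n e^{in\theta}/n^s$ and its reflection, so that after changing the order of summation the diagonal terms $m\equiv \pm n$ or $m\equiv -2n$ emerge with the correct signs.

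The key technical step is the same order-of-summation interchange and term-by-term integration against $\theta$ used in Theorem~\ref{T-2-1}, justified by \cite[\S 3.35]{WW} and \cite[\S 4]{MT-Quart}; the novelty is bookkeeping the residue classes. I would introduce the kernel $K_N(\theta)=\sum_{m\geq 1,\, N\nmid m} x^m e^{im\theta}/(m^{s+1}\sin(2m\pi/N))$ type object, or rather run the argument so that the singular contributions from $m\equiv 0\pmod N$ separate out cleanly: those produce the term $\frac{2}{N^{s+2}}\mathrm{Li}(s+2;x^N)$ (the ``diagonal'' piece where $m=n$ after rescaling by $N$), while the non-divisible residues produce the sum $\frac{\pi}{N}\sum_{N\nmid m} x^m/(\sin(2m\pi/N)m^{s+1})$ via the evaluation of a geometric-type sum of roots of unity weighted by $1/(1-\zeta_N^{a})$, which is exactly what yields a cotangent/cosecant. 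After establishing the identity for $\Re s>1$, I would extend to $\Re s\geq 1$ by absolute convergence of both sides, as in Theorem~\ref{T-2-1}.

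The main obstacle I expect is getting the combinatorics of the congruence conditions exactly right: one must check that the three double sums on the left of \eqref{4-3} — namely $m\equiv n$, $m\equiv -2n$, and the reflected $m\equiv -2n$ term with $x^{m+n}$ — are precisely what drops out, with no leftover off-diagonal contributions and with the stated signs. This amounts to verifying an identity among sums of roots of unity: something like $\frac1N\sum_{a=0}^{N-1}\zeta_N^{a(m-n)} + \frac1N\sum_{a=0}^{N-1}\zeta_N^{a(m+2n)}$ should collapse, after the Fourier manipulation, to the kernel's Laurent expansion, and the cross terms must cancel thanks to $N$ odd (so that $2$ is invertible mod $N$) and to parity of $\sin$. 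A secondary subtlety is the correct choice of fundamental interval for $\theta$ and the precise form of \eqref{e-2-1} to use at scale $N$ (i.e.\ replacing $\theta$ by $N\theta$ and tracking where the sawtooth $\phi(0)\theta=-\theta/2$ lands), since this governs whether the right-hand side carries the factor $2/N^{s+2}$ rather than, say, $1/N^{s+2}$. Once the root-of-unity identity is pinned down, the rest is the routine integration-and-rearrangement machinery already deployed in the proof of Theorem~\ref{T-2-1}.
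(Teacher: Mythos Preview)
Your general framework is right --- use the Fourier identity underlying Theorem~\ref{T-2-1} and pick out congruence classes via root-of-unity orthogonality --- but the specific mechanism you sketch differs from the paper's, and the version you propose would not produce the left-hand side of \eqref{4-3} as written. Two concrete points:

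\textbf{(1) No scaling by $N$; instead, antidifferentiate and then \emph{evaluate} at $N$ points.} The paper does not multiply by a kernel like $\sum_{m\geq 1}\sin(Nm\theta)/(Nm)$ or restrict to a fundamental interval of length $2\pi/N$. Rather, it starts from \eqref{FE-poly} at $b=1$, takes the \emph{indefinite} $\theta$-integral on $(-\pi,\pi)$ (this is what inserts the extra $1/(m+n)$ and $1/(n-m)$ in the denominators, producing the triple factor $mn^s(m+n)$), determines the constant of integration, and then \emph{substitutes} the $N$ values $\theta=2\pi a/N-\pi$, $a=0,\dots,N-1$, summing over $a$. The right-hand side term $\frac{\pi}{N}\sum_{N\nmid m}x^m/(\sin(2m\pi/N)m^{s+1})$ arises precisely because the surviving $i\theta$-term in the integrated identity becomes $i\pi(2a-N)/N$ at these points, and one uses the elementary evaluation
\[
\sum_{a=0}^{N-1}\frac{2a-N}{N}\,\xi^{-an}=
\begin{cases}-1 & N\mid n,\\ \dfrac{2}{\xi^{-n}-1} & N\nmid n,\end{cases}
\]
applied with $\xi=\eta,\eta^2$ (here $\eta=e^{2\pi i/N}$), which is where the oddness of $N$ enters. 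Your ``sawtooth at scale $N$'' route would instead give denominators of the shape $Nm\pm n$, not the congruence-restricted $m\equiv n$ or $m\equiv -2n$ sums.

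\textbf{(2) The substitution $x\mapsto x\eta^{-2a}$ is the source of the $m\equiv -2n$ condition.} This is the step you do not identify, and it is what makes the combinatorics work. When you plug $\theta=2\pi a/N-\pi$ into $e^{i(m+n)\theta}$ and $e^{i(n-m)\theta}$, you get phases $\eta^{a(m+n)}$ and $\eta^{a(n-m)}$ (up to signs). Replacing $x$ by $x\eta^{-2a}$ multiplies the coefficient of $x^n$ by $\eta^{-2an}$ and the coefficient of $x^{m+n}$ by $\eta^{-2a(m+n)}$; after this, the net exponents of $\eta^a$ in the three double sums are $m-n$, $-(m+n)$ (which the paper splits according to $m\gtrless n$), and $-(m+n)$ respectively. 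Summing over $a$ then enforces exactly $m\equiv n$, $m\equiv -2n$, and $m\equiv -2n\pmod N$. Without this $x$-twist you would only see $m+n\equiv 0$ and $m-n\equiv 0$, and the third sum (with $x^{m+n}$) would not appear with the correct congruence. So the ``leftover off-diagonal contributions'' you worry about are handled not by a clever cancellation but by this deliberate change of variable in $x$.

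Once you make these two adjustments, the remaining steps --- interchange of summation, extension from $\Re s>1$ to $\Re s\geq 1$ by absolute convergence --- are exactly as you describe.
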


\begin{proof}
We fix $s\in \mathbb{C}$ with $\Re s>1$. 
In \eqref{FE-poly}, we let $b=1$, multiply by $(-x)$, and replace $n+1$ by $n$. Then we have
\begin{align}
& \lim_{M\to \infty}\sum_{m=1}^{M}\sum_{n\geq 1} \frac{(-1)^{m+n}x^n e^{i(m+n)\theta}}{mn^s} -\lim_{M\to \infty}\sum_{m=1}^{M}\sum_{n\geq 1 \atop m\not=n} \frac{(-1)^{m+n}x^n e^{i(-m+n)\theta}}{mn^s} \label{FE-poly-2} \\
& \qquad +{i\theta}\sum_{n\geq 1} \frac{(-1)^{n}x^n e^{in\theta}}{n^s}
=\sum_{n\geq 1}\frac{x^n}{n^{s+1}},\notag
\end{align}
which is uniformly convergent on any compact subset of $(-\pi,\pi)$ though it is not absolutely convergent as we mentioned in Section \ref{sec-2}. Hence we can consider the indefinite integral of \eqref{FE-poly-2} on $(-\pi,\pi)$, namely
\begin{align}
& \frac{1}{i}\sum_{m\geq 1}\sum_{n\geq 1} \frac{(-1)^{m+n}x^n e^{i(m+n)\theta}}{mn^s(m+n)} -\frac{1}{i}\sum_{m\geq 1}\sum_{n\geq 1 \atop m\not=n} \frac{(-1)^{m+n}x^n e^{i(-m+n)\theta}}{mn^s(-m+n)} \label{FE-poly-3} \\
& \qquad +{\theta}\sum_{n\geq 1} \frac{(-1)^{n}x^n e^{in\theta}}{n^{s+1}}-\frac{1}{i}\sum_{n\geq 1} \frac{(-1)^{n}x^n e^{in\theta}}{n^{s+2}}=C_0^*+\theta \sum_{n\geq 1}\frac{x^n}{n^{s+1}}\ \ (\theta \in (-\pi,\pi))\notag
\end{align}
with a certain constant $C_0^*$. 
Now we consider the definite integral of \eqref{FE-poly-3} from $-\pi$ to $\pi$. Then, by noting
$$\int_{-\pi}^{\pi} e^{in\theta} d\theta=0\qquad (n\in \mathbb{Z}\setminus \{0\}),$$
we have
$$C_0^*=\frac{1}{i}\sum_{n\geq 1}\frac{x^n}{n^{s+2}}.$$
On the both sides of \eqref{FE-poly-3}, we set $\eta=e^{2\pi i/N}$ for an odd positive integer $N$, $\theta=2\pi a/N-\pi$, and replace $x$ by $x\eta^{-2a}$ for $a=0,1,\ldots,N-1$. Then, multiplying by $i$, we have 
\begin{align}
& \sum_{m\geq 1}\sum_{n\geq 1} \frac{x^n\eta^{a(m-n)}}{mn^s(m+n)} -\sum_{m\geq 1}\sum_{n\geq 1 \atop m\not=n} \frac{x^n\eta^{-a(m+n)}}{mn^s(-m+n)} +\frac{i\pi(2a-N)}{N}\sum_{n\geq 1} \frac{x^n\eta^{-an}}{n^{s+1}}\label{FE-poly-4} \\
& \quad -\sum_{n\geq 1} \frac{x^n\eta^{-an}}{n^{s+2}}=\sum_{n\geq 1} \frac{x^n\eta^{-2an}}{n^{s+2}}+\frac{i\pi(2a-N)}{N} \sum_{n\geq 1}\frac{x^n\eta^{-2an}}{n^{s+1}}.\notag
\end{align}
In general, for any primitive $N$th root $\xi$ of unity, by considering 
$$\frac{1-X^N}{1-X}=1+X+\cdots+X^{N-1}$$
and its derivation, we immediately obtain that 
\begin{equation*}
\sum_{a=0}^{N-1}\xi^{-an}=
\begin{cases}
N & (N \mid n) \\
0 & (N \nmid n),
\end{cases}
\end{equation*}
and 
\begin{equation*}
\sum_{a=0}^{N-1}a\xi^{-an}=
\begin{cases}
\frac{N(N-1)}{2} & (N \mid n) \\
\frac{N}{\xi^{-n}-1} & (N \nmid n).
\end{cases}
\end{equation*}
Hence we have
\begin{equation*}
\sum_{a=0}^{N-1}\frac{2a-N}{N}\xi^{-an}=
\begin{cases}
-1 & (N \mid n) \\
\frac{2}{\xi^{-n}-1} & (N \nmid n).
\end{cases}
\end{equation*}
Therefore, summing up \eqref{FE-poly-4} for $a=0,1,\ldots,N-1$ and using the above results with $\xi=\eta,\eta^2$ (because $N$ is odd), we obtain \eqref{4-3} for $s\in \mathbb{C}$ with $\Re s>1$. Further we can see that \eqref{4-3} holds for $\Re s\geq 1$. Thus we have the assertion.
\end{proof}

\begin{proof}[Proof of Theorem \ref{T-4-1}]
Put $s=k\in \mathbb{N}$ on the both sides of \eqref{4-3}. Then, by the same method as in the proof of Theorem \ref{Main-T}, namely, by the partial fraction decomposition, we see that the first term on the left-hand side of \eqref{4-3} is equal to 
$$\sum_{\nu=1}^{k}\sum_{m,n\geq 1 \atop {m \equiv n\,(\text{mod\ }N)}}\frac{x^n}{n^\nu(m+n)^{k+2-\nu}}+\sum_{m,n\geq 1 \atop {m \equiv n\,(\text{mod\ }N)}}\frac{x^n}{m(m+n)^{k+1}}.$$
Similarly we can decompose the second and the third terms on the left-hand side of \eqref{4-3}. Therefore we consequently obtain \eqref{4-1}.
\end{proof}
%

To prove Proposition \ref{P-4-3}, we put $b=1$ in \eqref{FE-poly}.
It seems difficult to show an analogue of Proposition \ref{P-4-3} for general $b$.
However, at least in the case $b=1/2$, we can deduce a formula 
analogous to Proposition \ref{P-4-3}, which gives a shifting analogue of
Theorem \ref{T-4-1}.
We conclude the present paper with the statement of those results.

\begin{proposition}\label{P-4-3-2}
Let $N$ be any odd positive integer. Then
\begin{align*}
& \sum_{m,n\geq 0 \atop {m \equiv n\,(\text{mod\ }N)}}\frac{x^n}{(m+1/2)(n+1/2)^s(m+n+1)}\\
& \qquad+\sum_{m,n\geq 0 \atop {m \equiv -2n-2\,(\text{mod\ }N)}}\frac{x^n}{(m+1)(n+1/2)^s(m+n+3/2)} \\
& \qquad -\sum_{m,n\geq 0 \atop {m \equiv -2n-2\,(\text{mod\ }N)}}\frac{x^{m+n+1}}{(m+1)(n+1/2)(m+n+3/2)^s}\\
& \qquad =\frac{x^{(N-1)/2}}{N^{s+2}}\sum_{n\geq 0} \frac{x^{Nn}}{(n+1/2)^{s+2}}+\frac{\pi}{N}\sum_{m\geq 0\atop N\nmid (2m+1)} \frac{x^m}{\sin((2m+1)\pi/N)(m+1/2)^{s+1}}\notag
\end{align*}
holds for $s\in \mathbb{C}$ with $\Re s\geq 1$.
\end{proposition}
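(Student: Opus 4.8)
The plan is to follow closely the derivation of Proposition \ref{P-4-3}, but starting from the functional relation \eqref{FE-poly} with $b=1/2$ rather than $b=1$. First I would set $b=1/2$ in \eqref{FE-poly}; since $b\neq 1$, the right-hand side is simply $0$, and the condition $m\neq n+b$ is automatically satisfied (as $m\in\mathbb{N}$ and $n+1/2\notin\mathbb{N}$), so the inner sum over $n$ is unrestricted. Thus for $\theta\in(-\pi,\pi)$ one has, with the same uniform (non-absolute) convergence as before,
\begin{align*}
& \lim_{M\to\infty}\sum_{m=1}^{M}\sum_{n\geq 0}\frac{(-1)^{m+n}x^n e^{i(m+n+1/2)\theta}}{m(n+1/2)^s}
-\lim_{M\to\infty}\sum_{m=1}^{M}\sum_{n\geq 0}\frac{(-1)^{m+n}x^n e^{i(-m+n+1/2)\theta}}{m(n+1/2)^s}\\
& \qquad +i\theta\sum_{n\geq 0}\frac{(-1)^n x^n e^{i(n+1/2)\theta}}{(n+1/2)^s}=0.
\end{align*}
The idea is then to integrate this identity in $\theta$ twice — once to pass from the "denominator $m$" shape to the "denominator $m(m+n+1/2)$" shape and to produce the $\theta\sum\cdots$ term, exactly as in \eqref{FE-poly-3} — picking up an undetermined constant $C_0^{**}$ along the way, which is pinned down by integrating the resulting indefinite-integral identity once more over $[-\pi,\pi]$ and using $\int_{-\pi}^{\pi}e^{ir\theta}\,d\theta=0$ for $r\in\mathbb{R}\setminus\{0\}$; here the half-integer shifts mean the surviving terms on the right are of the form $x^n/(n+1/2)^{s+2}$. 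This reproduces the $b=1/2$ analogue of \eqref{FE-poly-3}.

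Next I would perform the discrete Fourier/root-of-unity substitution as in the proof of Proposition \ref{P-4-3}. Put $\eta=e^{2\pi i/N}$, specialise $\theta=2\pi a/N-\pi$, replace $x$ by $x\eta^{-(2a+1)}$ (this is the natural choice because the exponentials carry the half-integer phase $e^{i(n+1/2)\theta}$, so $e^{i(n+1/2)(2\pi a/N-\pi)}$ combines with $x^n$ to give a clean power of $\eta^{a}$ times an $a$-independent sign), and sum over $a=0,1,\dots,N-1$. The three character sums needed are exactly the ones already computed in the proof of Proposition \ref{P-4-3}: $\sum_a \xi^{-an}$, $\sum_a a\xi^{-an}$, and hence $\sum_a \frac{2a-N}{N}\xi^{-an}$, evaluated now at $\xi=\eta$ and $\xi=\eta^2$ (both primitive since $N$ is odd). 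Tracking how the phase shift by $e^{-\pi i(n+1/2)}$ and the substitution $x\mapsto x\eta^{-(2a+1)}$ interact, the congruence conditions that emerge are $m\equiv n\ (\mathrm{mod}\ N)$ from the "$\eta^{a(m-n)}$" term and $m\equiv -2n-2\ (\mathrm{mod}\ N)$ from the "$\eta^{-a(m+n)}$"-type term after reindexing (the shift of $2$ coming from the two half-integer units $1/2+1/2$ in the relevant denominators $m+1$ and $m+n+3/2$), which matches the statement. The $N\mid(\cdot)$ versus $N\nmid(\cdot)$ dichotomy turns into the split between the main term $\tfrac{x^{(N-1)/2}}{N^{s+2}}\sum_{n\geq 0}x^{Nn}(n+1/2)^{-s-2}$ (the factor $x^{(N-1)/2}$ being the bookkeeping constant from the substitution $x\mapsto x\eta^{-(2a+1)}$ restricted to indices with $N\mid(2m+1)$, i.e. $m\equiv (N-1)/2$) and the trigonometric sum $\tfrac{\pi}{N}\sum_{N\nmid(2m+1)}x^m/(\sin((2m+1)\pi/N)(m+1/2)^{s+1})$, using $\frac{2}{\xi^{-r}-1}=\frac{-1}{1-\cos}+i\cot$-type simplification as in the $b=1$ case; only the imaginary part survives to give the $1/\sin$ shape after one notices the real parts cancel between the $\eta$ and $\eta^2$ contributions.

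Finally, as in the earlier proofs, the identity is established first for $\Re s>1$, where the interchange of summation and integration is justified by the uniform convergence on compact subsets of $(-\pi,\pi)$ (the justification being identical to that invoked after \eqref{FE-poly}, cf.\ \cite[\S 4]{MT-Quart}, \cite[Section 7]{EMT}), and then extended to $\Re s\geq 1$ by absolute convergence of both sides there. I expect the main obstacle to be purely bookkeeping: correctly tracking the half-integer shifts through the substitution $x\mapsto x\eta^{-(2a+1)}$ and the two integrations, so that the congruence $m\equiv -2n-2$, the extra shift in the denominators $m+1$ and $m+n+3/2$, and the constant $x^{(N-1)/2}$ in the main term all come out exactly right; the analytic content (uniform convergence, term-by-term integration, the root-of-unity sums) is already in place from Proposition \ref{P-4-3}.
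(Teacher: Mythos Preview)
Your proposal is correct and follows the same route the paper indicates: the paper states only that the proof ``is quite similar to that of Proposition \ref{P-4-3}, and we omit it,'' and you have correctly identified that one should start from \eqref{FE-poly} with $b=1/2$, take an antiderivative in $\theta$, fix the integration constant, and then perform the root-of-unity substitution and summation over $a$.

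One technical slip to watch: the claim $\int_{-\pi}^{\pi}e^{ir\theta}\,d\theta=0$ for all $r\in\mathbb{R}\setminus\{0\}$ is false; for half-integer $r$ one has $\int_{-\pi}^{\pi}e^{ir\theta}\,d\theta=2\sin(r\pi)/r=\pm 2/r\neq 0$. So the determination of $C_0^{**}$ by integrating the indefinite-integral identity over $[-\pi,\pi]$ does not proceed quite as cleanly as in the $b=1$ case, and you should expect nonvanishing contributions from the half-integer-frequency terms when pinning down the constant. This does not affect the overall strategy, only the bookkeeping you already flagged as the main obstacle.
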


The proof of this proposition is quite similar to that of Proposition \ref{P-4-3},
and we omit it.
From this fact, we can prove the following result similarly to Theorem \ref{T-4-1}.

\begin{theorem} \label{T-4-4}
Let $N$ be any odd positive integer. For $k\in \mathbb{N}$ and $x\in \mathbb{C}$ with $|x|\leq 1$,
\begin{align}
& \sum_{\nu=1}^{k} \bigg\{ \sum_{m,n\geq 0 \atop {m \equiv n\,(\text{mod\ }N)}}
\frac{x^n}{(n+1/2)^\nu(m+n+1)^{k+2-\nu}}\bigg\} \label{E-4-2-1}\\
& \ +\sum_{m,n\geq 0 \atop {m \equiv n\,
(\text{mod\ }N)}}
\frac{x^n}{(m+1/2)(m+n+1)^{k+1}}\notag\\
& \ +\sum_{\nu=1}^{k} \bigg\{ \sum_{m,n\geq 0 \atop {m \equiv -2n-2\,
(\text{mod\ }N)}} \frac{x^n}{(n+1/2)^\nu(m+n+3/2)^{k+2-\nu}}\bigg\}\notag \\
& \ + \sum_{m,n\geq 0 \atop {m \equiv -2n-2\,
(\text{mod\ }N)}} \frac{x^n}{(m+1)(m+n+3/2)^{k+1}}\notag\\
& \ - \sum_{m,n\geq 0 \atop {m \equiv -2n-2\,
(\text{mod\ }N)}} \frac{x^{m+n+1}}{(m+1)(m+n+3/2)^{k+1}}\notag\\
& \ - \sum_{m,n\geq 0 \atop {m \equiv -2n-2\,
(\text{mod\ }N)}} \frac{x^{m+n+1}}{(n+1/2)(m+n+3/2)^{k+1}}\notag\\
& \qquad =\frac{x^{(N-1)/2}}{N^{k+2}}\sum_{n\geq 0} \frac{x^{Nn}}{(n+1/2)^{k+2}}+\frac{\pi}{N}\sum_{m\geq 0\atop N\nmid (2m+1)} \frac{x^m}{\sin((2m+1)\pi/N)(m+1/2)^{k+1}}.\notag
\end{align}
\end{theorem}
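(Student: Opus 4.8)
The plan is to mimic the passage from Proposition \ref{P-4-3} to Theorem \ref{T-4-1}, now starting from Proposition \ref{P-4-3-2}. First I would set $s=k\in\mathbb{N}$ on both sides of the identity of Proposition \ref{P-4-3-2}; its right-hand side then becomes literally the right-hand side of \eqref{E-4-2-1}, so the whole problem reduces to rewriting the three double sums on its left-hand side by repeated partial fraction decomposition, exactly in the manner of the proof of Theorem \ref{Main-T}.

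Concretely, for the first double sum I would apply $\frac{1}{XY}=\left(\frac1X+\frac1Y\right)\frac{1}{X+Y}$ with $X=m+1/2$, $Y=n+1/2$ (so that $X+Y=m+n+1$) and iterate, obtaining
$$\frac{1}{(m+1/2)(n+1/2)^{k}(m+n+1)}=\sum_{\nu=2}^{k+1}\frac{1}{(n+1/2)^{k+2-\nu}(m+n+1)^{\nu}}+\frac{1}{(m+1/2)(m+n+1)^{k+1}};$$
after the reindexing $\nu\mapsto k+2-\nu$ this is exactly the first two lines of \eqref{E-4-2-1}. For the second double sum I would use the same identity with $X=m+1$, $Y=n+1/2$, $X+Y=m+n+3/2$, which gives
$$\frac{1}{(m+1)(n+1/2)^{k}(m+n+3/2)}=\sum_{\nu=1}^{k}\frac{1}{(n+1/2)^{\nu}(m+n+3/2)^{k+2-\nu}}+\frac{1}{(m+1)(m+n+3/2)^{k+1}},$$
which accounts for the third and fourth lines. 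For the third double sum a single application with $X=m+1$, $Y=n+1/2$ already splits
$$\frac{1}{(m+1)(n+1/2)(m+n+3/2)^{k}}=\frac{1}{(m+1)(m+n+3/2)^{k+1}}+\frac{1}{(n+1/2)(m+n+3/2)^{k+1}},$$
which, together with the minus sign and the factor $x^{m+n+1}$, produces the fifth and sixth lines. None of these manipulations alters $m$ or $n$, so the congruence conditions $m\equiv n$ and $m\equiv -2n-2\ (\mathrm{mod}\ N)$ are carried along unchanged; and since $\Re s=k\ge 1$ and every denominator $m+1/2$, $m+1$, $m+n+1$, $m+n+3/2$ is bounded away from $0$ on the range $m,n\ge 0$, all the series converge absolutely and the rearrangements are legitimate. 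Collecting terms yields \eqref{E-4-2-1}.

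Granting Proposition \ref{P-4-3-2}, this deduction is purely a bookkeeping exercise, and I expect no serious obstacle in it: in contrast with the proof of Theorem \ref{C-1-4}, no non-continuous boundary term appears here because the shift $b=1/2$ is held fixed. The one genuinely substantial step is Proposition \ref{P-4-3-2} itself, which (as the text remarks) should be proved by rerunning the argument for Proposition \ref{P-4-3} with $b=1/2$ in \eqref{FE-poly}: one must track the half-integer shifts through the indefinite integration, through the substitutions $\theta=2\pi a/N-\pi$ and $x\mapsto x\eta^{-2a}$, and through the summation over $a=0,\dots,N-1$, verifying that the $\sin((2m+1)\pi/N)$ denominators emerge (oddness of $N$ again ensuring that $\eta$ and $\eta^{2}$ are both primitive $N$th roots of unity). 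The hard part is thus already isolated in that proposition; once it is in hand, Theorem \ref{T-4-4} follows immediately.
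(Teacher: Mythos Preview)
Your proposal is correct and follows essentially the same route the paper indicates: set $s=k$ in Proposition~\ref{P-4-3-2} and apply the repeated partial fraction decomposition $\frac{1}{XY}=\bigl(\frac1X+\frac1Y\bigr)\frac{1}{X+Y}$ to each of the three double sums, exactly as in the passage from Proposition~\ref{P-4-3} to Theorem~\ref{T-4-1}. Your explicit decompositions and the reindexing $\nu\mapsto k+2-\nu$ are all accurate, and your remark that the substantive work lies entirely in Proposition~\ref{P-4-3-2} matches the paper's own stance.
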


\begin{example} Set $x=1$ and $k=1$ in \eqref{E-4-2-1}.
Then the last four sums on the left-hand side cancel with each other, and so we have
\begin{align*}
& \sum_{m,n\geq 0 \atop {m \equiv n\,
(\text{mod\ }N)}} \frac{1}{(n+1/2)(m+n+1)^{2}}\\
& \qquad =\frac{1}{2N^{3}}\sum_{n\geq 0} \frac{1}{(n+1/2)^{3}}+\frac{\pi}{2N}\sum_{m\geq 0\atop N\nmid (2m+1)} \frac{1}{\sin((2m+1)\pi/N)(m+1/2)^{2}}.\notag
\end{align*}
In particular when $N=1$, we have
\begin{align*}
\sum_{m,n\geq 0} \frac{1}{(n+1/2)(m+n+1)^{2}}& =\frac{1}{2}\sum_{n\geq 0} \frac{1}{(n+1/2)^{3}}=\frac{7}{2}\zeta(3).
\end{align*}
When $N=3$, since
$\sin((2m+1)\pi/3)=(\sqrt{3}/2)\chi_3(2m+1)$, we have 
\begin{align*}
\sum_{m,n\geq 0 \atop {m \equiv n\,
(\text{mod\ }3)}} \frac{1}{(n+1/2)(m+n+1)^{2}}& =\frac{7}{54}\zeta(3)
+\frac{5\pi}{3\sqrt{3}}L(2,\chi_3).\notag
\end{align*}
\end{example}

\bigskip

\end{document}